\newtheorem{theorem}{Theorem}
\theoremstyle{definition}
\newtheorem{proposition}{Proposition}
\numberwithin{equation}{section}
\newcommand{\db}[1]{\llbracket #1\rrbracket}
\newcommand{\Z}{\mathbb{Z}}
\newcommand{\R}{\mathbb{R}}
\begin{document}

\title{On a state model for the $SO(2n)$ Kauffman polynomial}

\author{Carmen Caprau}
\address{Department of Mathematics, California State University, Fresno, CA 93740, USA}
\email{ccaprau@csufresno.edu}
\urladdr{}
\author{David Heywood}
\address{Department of Mathematics, California State University, Fresno, CA 93740, USA}
\email{davaudoo@gmail.com}
\author{Dionne Ibarra}
\address{Department of Mathematics, California State University, Fresno, CA 93740, USA}
\email{luxchasehidknd@yahoo.com}

\date{}
\subjclass[2010]{57M27; 57M15}
\keywords{graphs, invariants for knots and links, Kauffman polynomial}

\begin{abstract}

Fran\c{c}ois Jaeger presented the two-variable Kauffman polynomial of an unoriented link $L$ as a weighted sum of HOMFLY-PT polynomials of oriented links associated with $L$. Murakami, Ohtsuki and Yamada (MOY) used planar graphs and a recursive evaluation of these graphs to construct a state model for the $sl(n)$-link invariant (a one-variable specialization of the HOMFLY-PT polynomial). We apply the MOY framework to Jaeger's work, and construct a state summation model for the $SO(2n)$ Kauffman polynomial.
\end{abstract}
\maketitle

\section{Introduction}\label{sec:introd}

The $SO(2n)$ Kauffman polynomial $\db{L}$ of an unoriented link $L$ is a Laurent polynomial in $q$,  uniquely determined by the following axioms: 
\begin{enumerate}
\item[1.] $\db{L_1} = \db{L_2}$, whenever $L_1$ and $L_2$ are regular isotopic links.
 \vspace{0.2cm}

\item[2.] $\left \llbracket \,\raisebox{-8pt}{\includegraphics[height=0.3in]{crossing}}\, \right \rrbracket - 
 \left\llbracket \,\raisebox{-8pt}{\includegraphics[height=0.3in, angle = 90]{crossing}} \,  \right \rrbracket = (q-q^{-1}) \left(  \left \llbracket \,\raisebox{-8pt}{\includegraphics[height=0.3in]{split2}}\, \right \rrbracket - \left \llbracket  \,\raisebox{-8pt}{\includegraphics[height=0.3in]{split1}}\, \right \rrbracket \right)$.
 \vspace{0.2cm}
 \item[3.] $\left \llbracket \,\raisebox{-8pt}{\includegraphics[height=0.3in]{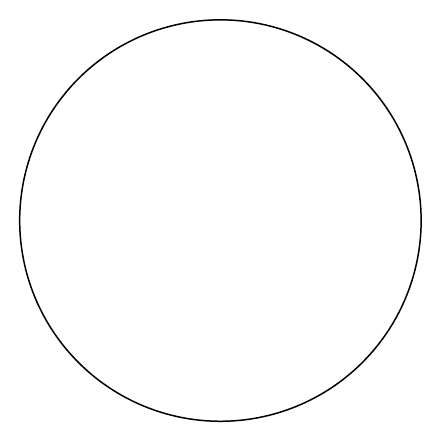}}\, \right \rrbracket = \displaystyle \frac{q^{2n-1} - q^{1-2n}}{q-q^{-1}}+1$.
 \vspace{0.2cm}
\item[4.] $\left \llbracket \reflectbox{\raisebox{-8pt}{\includegraphics[height=0.3in,width = 0.3in, angle = 90]{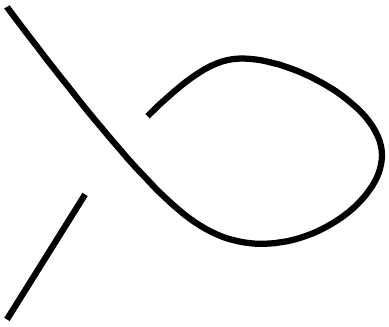}}}\right \rrbracket  = q^{2n-1} \left \llbracket \, \raisebox{-8pt}{\includegraphics[height=0.25in, width = 0.3in]{arc2}}\,\,\right \rrbracket , \quad \left \llbracket\raisebox{-8pt}{\includegraphics[height=0.3in, width = 0.3in, angle = 90]{negkink}}\right \rrbracket  = q^{1-2n} \left \llbracket\,\raisebox{-8pt}{\includegraphics[height=0.25in, width = 0.3in]{arc2}}\,\,\right \rrbracket$.
\end{enumerate}

The diagrams in both sides of the second or fourth equations represent parts of larger link diagrams that are identical except near a point where they look as indicated. For more details about this polynomial (and its two-variable extension, namely the Dubrovnik version of the two-variable Kauffman polynomial) we refer the reader to~\cite{K1, K2}. 

Kauffman and Vogel~\cite{KV} extended the two-variable Dubrovnik polynomial to a three-variable rational function for knotted 4-valent graphs (4-valent graphs embedded in $\mathbb{R}^3$) with rigid vertices. For the case of the $SO(2n)$ Kauffman polynomial, this extension is obtained by defining 
\begin{eqnarray*}
 \left \llbracket \,\raisebox{-8pt}{\includegraphics[height=0.3in]{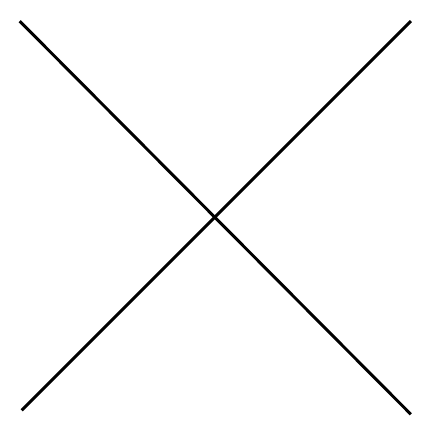}} \,\right \rrbracket : &=& \left \llbracket \,\raisebox{-8pt}{\includegraphics[height=0.3in, angle = 90]{crossing}}\, \right \rrbracket - q \left \llbracket \,\raisebox{-8pt}{\includegraphics[height=0.3in]{split1}}\,\right \rrbracket -q^{-1}  \left \llbracket\,\raisebox{-8pt}{\includegraphics[height=0.3in]{split2}}\,\right \rrbracket \\
 &=&   \left \llbracket\,\raisebox{-8pt}{\includegraphics[height=0.3in]{crossing}}\,\right \rrbracket - q \left \llbracket\,\raisebox{-8pt}{\includegraphics[height=0.3in]{split2}}\, \right \rrbracket -q^{-1}  \left \llbracket\,\raisebox{-8pt}{\includegraphics[height=0.3in]{split1}}\, \right \rrbracket
 \end{eqnarray*}
 That is, the invariant for knotted 4-valent graphs with rigid vertices is defined in terms of the $SO(2n)$ Kauffman polynomial. In~\cite{KV}, it was also shown that the resulting polynomial of a knotted 4-valent graph satisfies certain graphical relations, which determine values for each unoriented planar 4-valent graph by recursive formulas defined entirely in the category of planar graphs.
 
The results in ~\cite{KV} imply that there is a state model for the Kauffman polynomial of an unoriented link via planar 4-valent graphs. This model can also be deduced from Carpentier's work~\cite{C} on the Kauffman-Vogel polynomial by changing one's perspective (the focus of Carpentier's paper is on invariants for graphs rather than on the Kauffman polynomial for links). A somewhat similar approach was used in~\cite{CT} to construct a rational function in three variables which is an invariant of regular isotopy of unoriented links, and provides a state summation model for the Dubrovnik version of the two-variable Kauffman polynomial. The corresponding state model makes use of a special type of planar trivalent graphs. 
 
Fran\c{c}ois Jaeger found a relationship between the  two-variable Kauffman polynomial and the regular isotopy version of the HOMFLY-PT polynomial. He showed that the Kauffman polynomial of an unoriented link $L$ can be obtained as a weighted sum of HOMFLY-PT polynomials of oriented links associated with $L$. For a brief description of Jaeger's construction we refer the reader to~\cite{K2}.
Murakami, Ohtsuki and Yamada (MOY) used planar trivalent graphs to construct in~\cite{MOY} a beautiful graphical calculus for the $sl(n)$-link polynomial (a one-variable specialization of the HOMFLY-PT polynomial). 

The motivation for this paper has its source in the following, natural, questions: Is there a way to apply the MOY model to Jaeger's formula and derive a state summation model for the $SO(2n)$ Kauffman polynomial? And if so, how is the resulting state model for the $SO(2n)$ Kauffman polynomial related to the one implicitly given in~\cite{KV}?

We slightly alter the MOY model for the $sl(n)$-link polynomial by working with (planar, cross-like oriented) 4-valent graphs instead of trivalent graphs. By implementing the MOY model into Jaeger's construction, we show that in order to construct a state model for the Kauffman polynomial it is not sufficient to allow only cross-like oriented 4-valent graphs but also alternating oriented vertices. The skein formalism that we obtain is as follows:

\[  \left \llbracket \,\raisebox{-8pt}{\includegraphics[height=.3in]{crossing}}\,\right \rrbracket  = q \left \llbracket\, \raisebox{-8pt}{\includegraphics[height=.3in]{split2}}\,\right \rrbracket
		+ q^{-1} \left \llbracket \raisebox{-8pt}{\includegraphics[height=.3in]{split1}}\right \rrbracket 
	 	-  \left \llbracket \,\raisebox{-8pt}{\includegraphics[height=.3in]{vertnc}}\,\right \rrbracket \]

\[\left \llbracket \raisebox{-8pt}{\includegraphics[height=.3in]{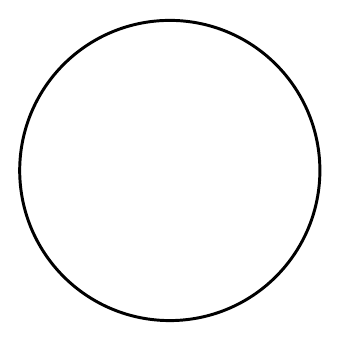}} \right \rrbracket = [2n-1]+1\]
\[
\left \llbracket \raisebox{-10pt}{\includegraphics[height=.35in]{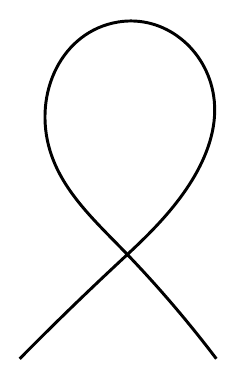}} \,\right \rrbracket = ([2n-2]+[2])\left \llbracket \raisebox{-3pt}{\includegraphics[width=.4in]{arc2}}\, \right \rrbracket  \] 
\[\left \llbracket\, \raisebox{-8pt}{\includegraphics[height=.3in]{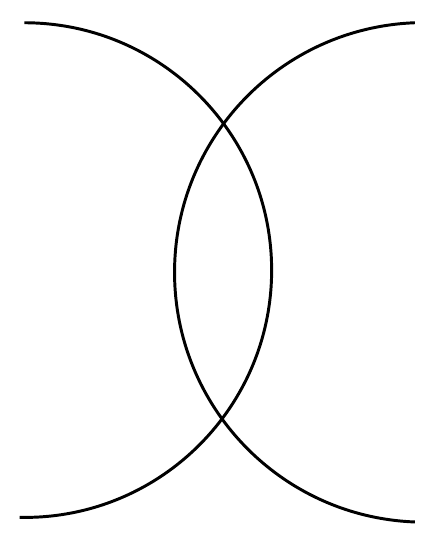}} \,\right \rrbracket = ([2n-3]+1)\left \llbracket\, \raisebox{-8pt}{\includegraphics[height=.3in]{split2}}\, \right \rrbracket + [2]\left \llbracket\, \raisebox{-8pt}{\includegraphics[height=.3in]{vertnc}}\, \right \rrbracket \]
\begin{eqnarray*}
&&\left \llbracket \raisebox{-8pt}{\includegraphics[height=.3in]{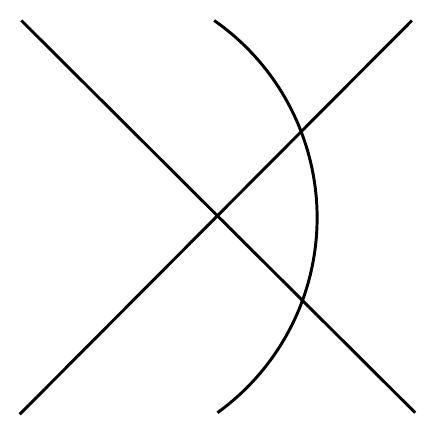}} \right \rrbracket
	+ \left \llbracket \raisebox{-8pt}{\includegraphics[height=.3in]{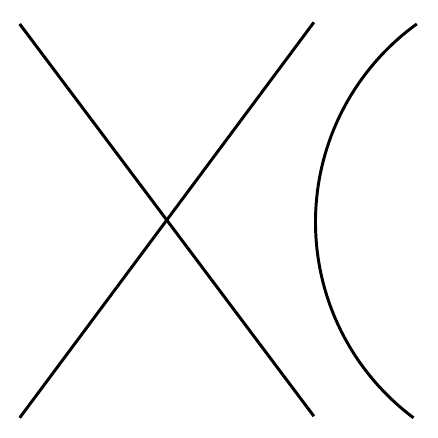}} \right \rrbracket
	- \left \llbracket \raisebox{-8pt}{\includegraphics[height=.3in]{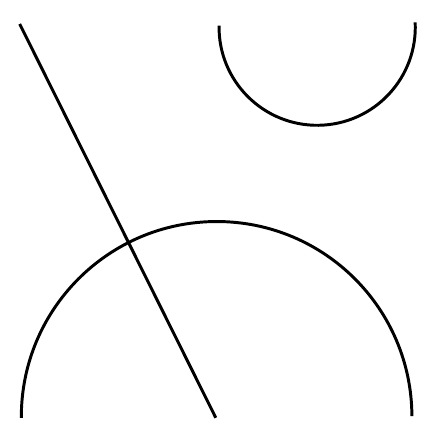}} \right \rrbracket
	- \left \llbracket \raisebox{-8pt}{\includegraphics[height=.3in]{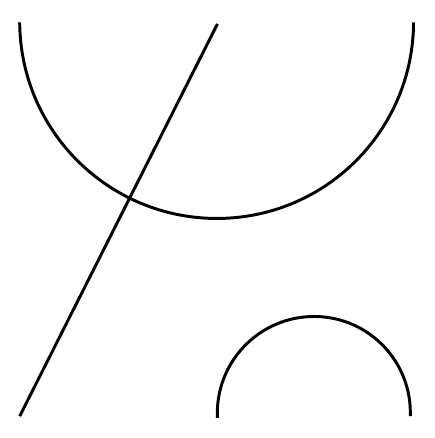}} \right \rrbracket
	- [2n-4] \left \llbracket \raisebox{-8pt}{\includegraphics[height=.3in]{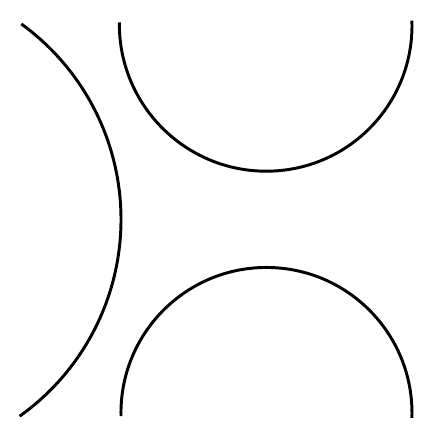}} \right \rrbracket = \\
	&& \left \llbracket \raisebox{-8pt}{\includegraphics[height=.3in]{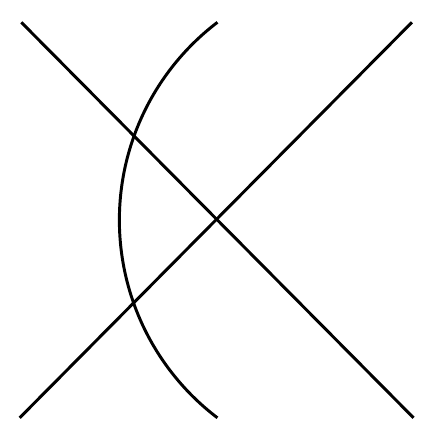}} \right \rrbracket
	+ \left \llbracket \raisebox{-8pt}{\includegraphics[height=.3in]{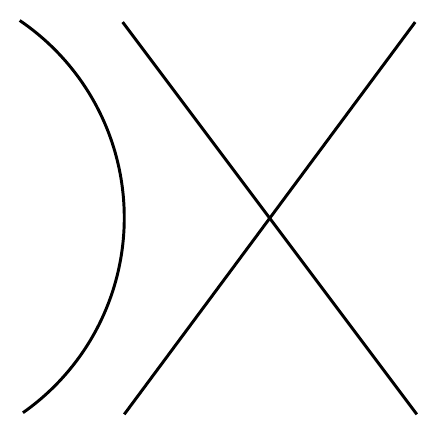}} \right \rrbracket
	- \left \llbracket \raisebox{-8pt}{\includegraphics[height=.3in]{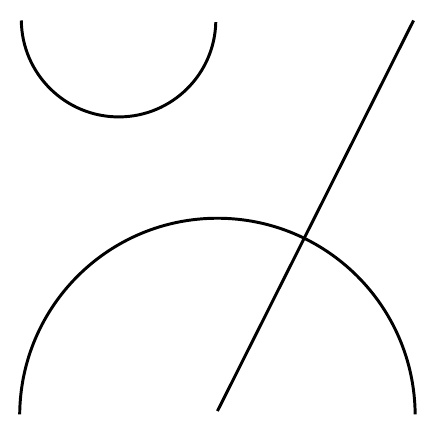}} \right \rrbracket
	- \left \llbracket \raisebox{-8pt}{\includegraphics[height=.3in]{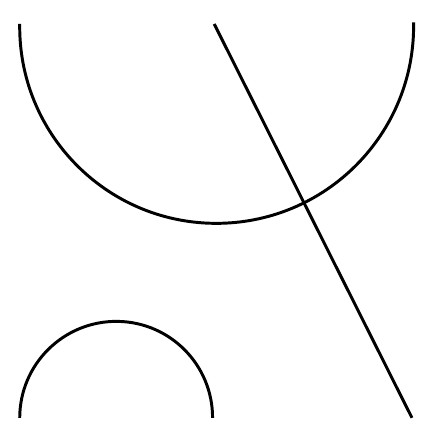}} \right \rrbracket
	- [2n-4] \left \llbracket \raisebox{-8pt}{\includegraphics[height=.3in]{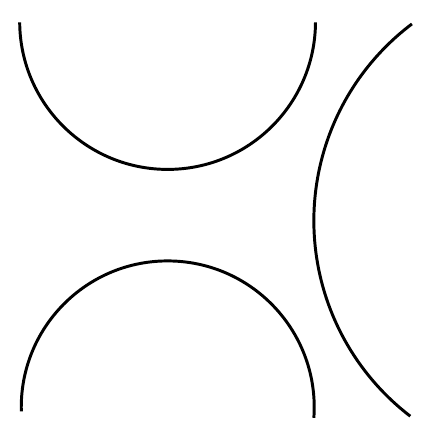}} \right \rrbracket 
	\end{eqnarray*}
	where $\displaystyle [n] = \displaystyle \frac{q^n - q^{-n}}{q - q^{-1}}$, and $n \in \Z$ with $n \geq 2$.
	
Comparing the graph skein relations above with the graphical relations derived by Kauffman and Vogel in~\cite{KV}, it is not hard to see that the state model for the $SO(2n)$ Kauffman polynomial that we arrive to is essentially the same as that implied by the work in~\cite{KV}  (up to a negative sign for the weight  received by the ``flat resolution" of a crossing), and that given in ~\cite[Subsection 5.1]{CT} (up to a change of variables). We would like to point out that Hao Wu~\cite{WU} used a different approach to write the Kauffman-Vogel graph polynomial as a state sum of the MOY graph polynomial.

The paper is organized as follows: In Section~\ref{sec:MOY} we provide a version of the Murakami-Ohtsuki-Yamada state model for the $sl(n)$-link polynomial, and in Section~\ref{sec:Jaeger} we review Jaeger's formula for the Kauffman polynomial. The heart of the paper is Section~\ref{sec:main}, in which we derive the state model for the $SO(2n)$ Kauffman polynomial.
 

\section{The MOY state model for the $sl(n)$ polynomial}\label{sec:MOY}

In this section, we give the Murakami-Ohtsuki-Yamada~\cite{MOY} state model for the regular isotopy version of the $sl(n)$ polynomial of an oriented link $L$. The $sl(n)$ polynomial is a one-variable specialization of the well-known HOMFLY-PT polynomial (see~\cite{HOMFLY, PT}).
Let $D$ be a generic diagram of $L$ containing $c$ crossings. We resolve each crossing of $D$ in the two ways shown below:
\[ \raisebox{-8pt}{\includegraphics[height=0.28in]{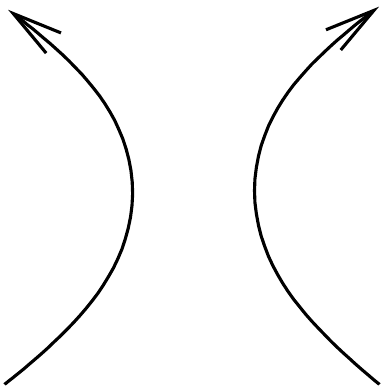}} \longleftarrow \raisebox{-8pt}{\includegraphics[height=0.3in]{crossingpos}} \ , \  \raisebox{-8pt}{\includegraphics[height=0.3in]{crossingneg}} \longrightarrow \raisebox{-8pt}{\includegraphics[height=0.28in]{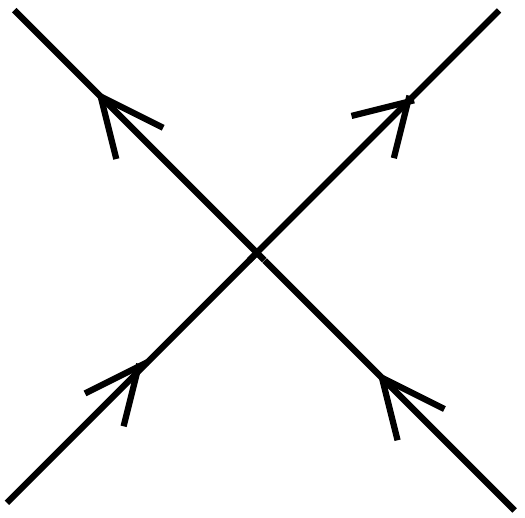}}   \]
This process yields $2^c$ resolutions (states) corresponding to the link diagram $D$. A resolution $\Gamma$ of $D$ is a $4$-valent oriented planar graph in $\R^2$, possibly with loops with no vertices, such that each vertex is crossing-type oriented:  \raisebox{-6pt}{\includegraphics[height = 0.23in]{vertex}}. There is a well-defined Laurent polynomial $R(\Gamma) \in \Z[q, q^{-1}]$ associated to a resolutions $\Gamma$, such that it satisfies the skein relations depicted in Figure~\ref{fig:web skein relations}, where $[n] = \displaystyle \frac{q^n - q^{-n}}{q - q^{-1}}$, and $n \in \Z$ with $n \geq 2$ (the symbol $R$ is omitted in the graph skein relations to avoid clutter). We will refer to $R(\Gamma)$ as the \textit{MOY graph polynomial} (see~\cite{MOY}).

\begin{figure}[ht]
\raisebox{-8pt}{\includegraphics[height = 0.35in]{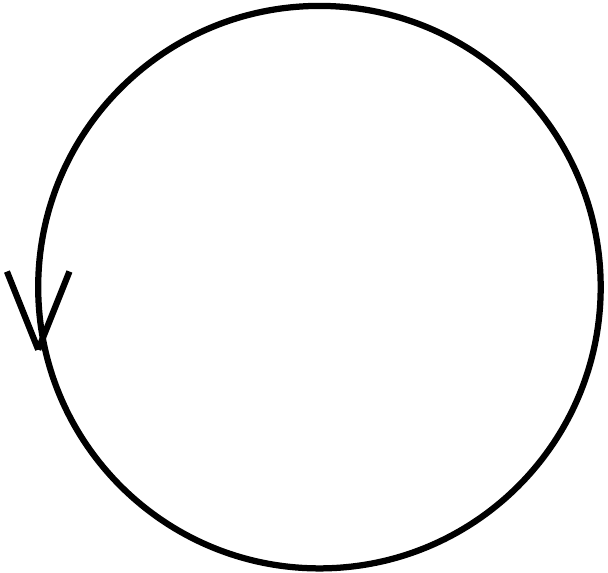}} = [n] \hspace{1cm} \raisebox{-15pt}{\includegraphics[height = 0.45in]{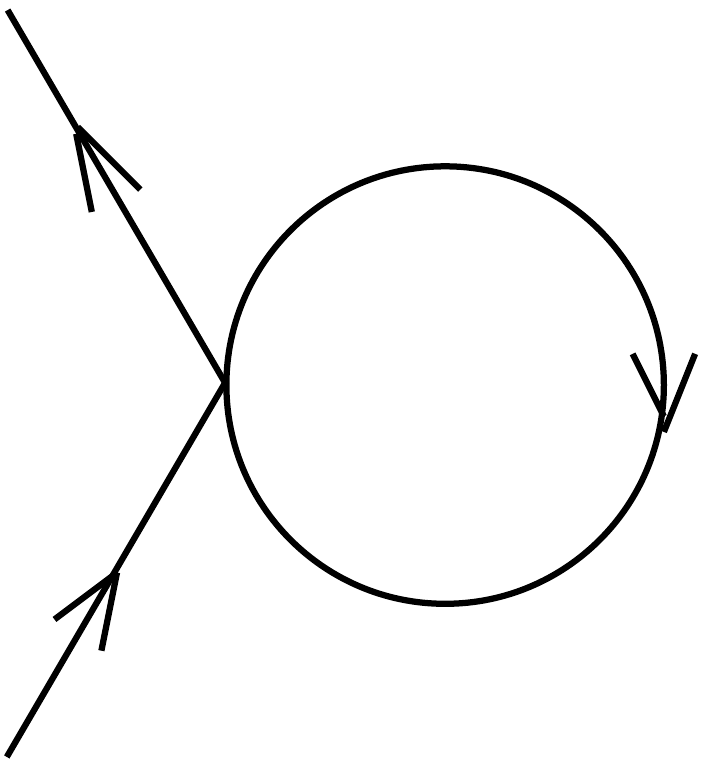}} = [n -1]\raisebox{-15pt}{\includegraphics[height = 0.45in]{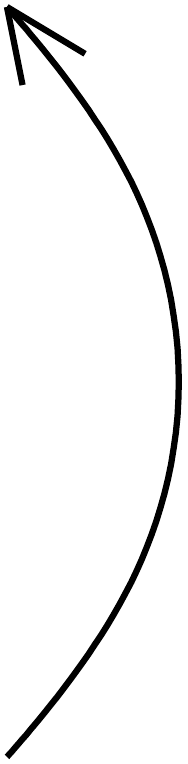}}

\vspace{0.3cm}
\raisebox{15pt}{\includegraphics[height = 0.65in, angle = 270]{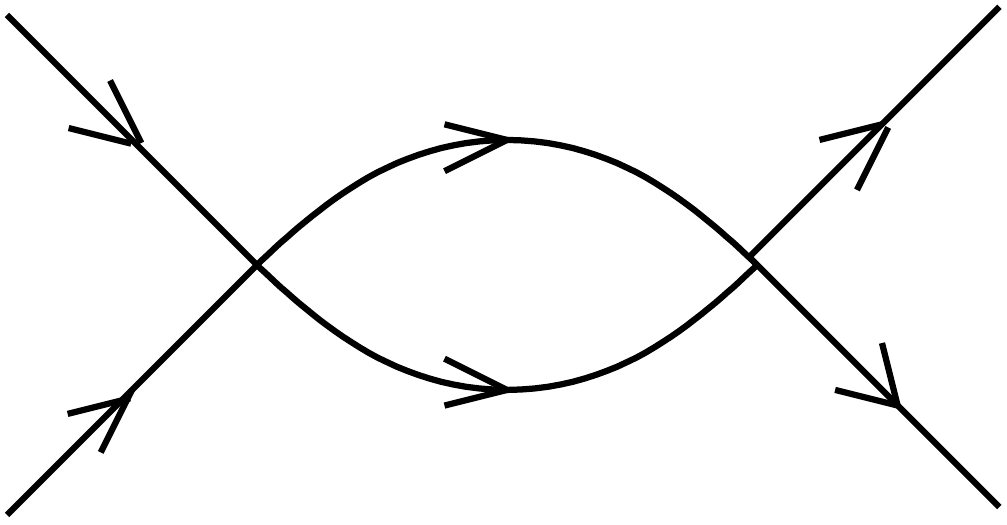}} = [2] \raisebox{15pt}{\includegraphics[height = 0.35in, angle = 270]{vertex}}

\vspace{0.3cm}
\raisebox{15pt}{\includegraphics[height = 0.65in, angle = 270]{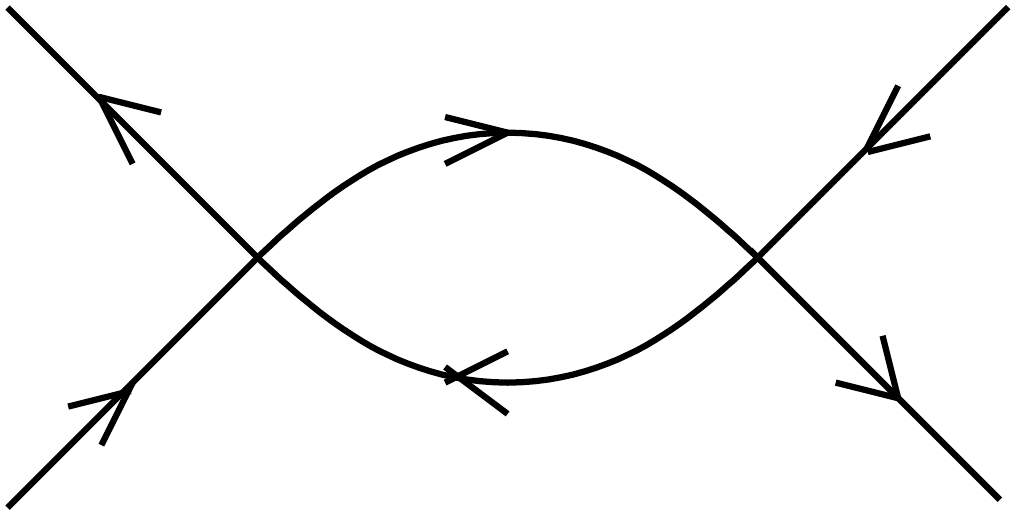}}  =  \raisebox{15pt}{\includegraphics[height = 0.45in, angle = 270]{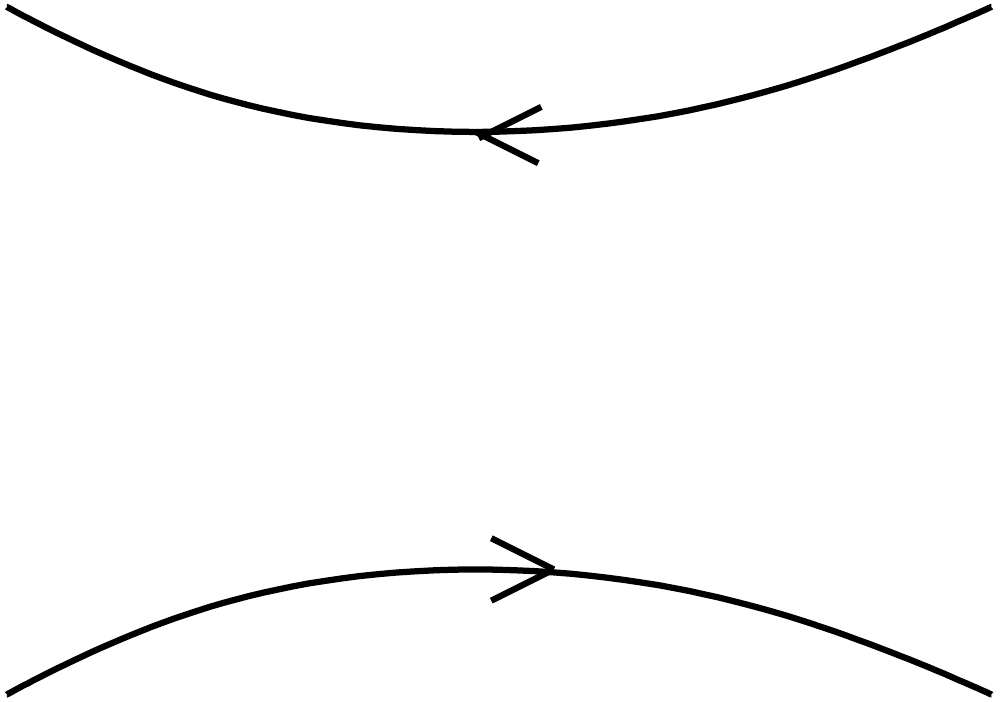}} + [n - 2] \raisebox{15pt}{\includegraphics[height = 0.35in, angle = 270]{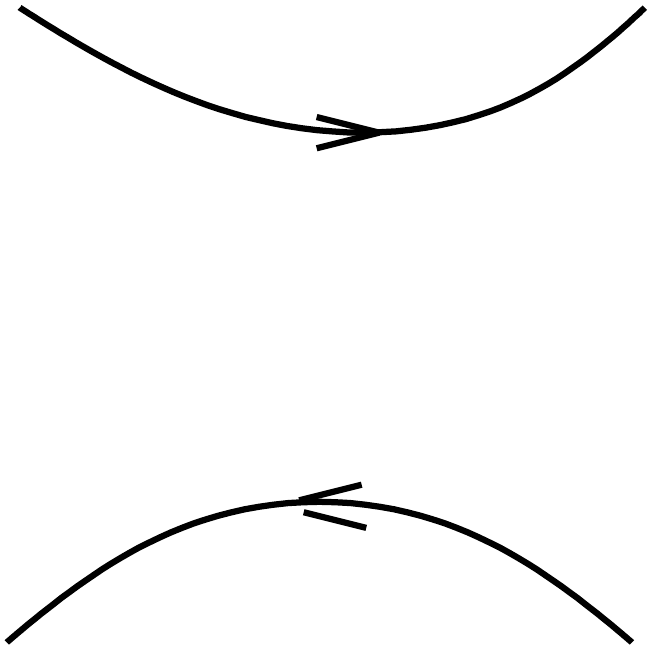}}

\vspace{0.3cm}
\raisebox{-20pt}{\includegraphics[height = 0.65in]{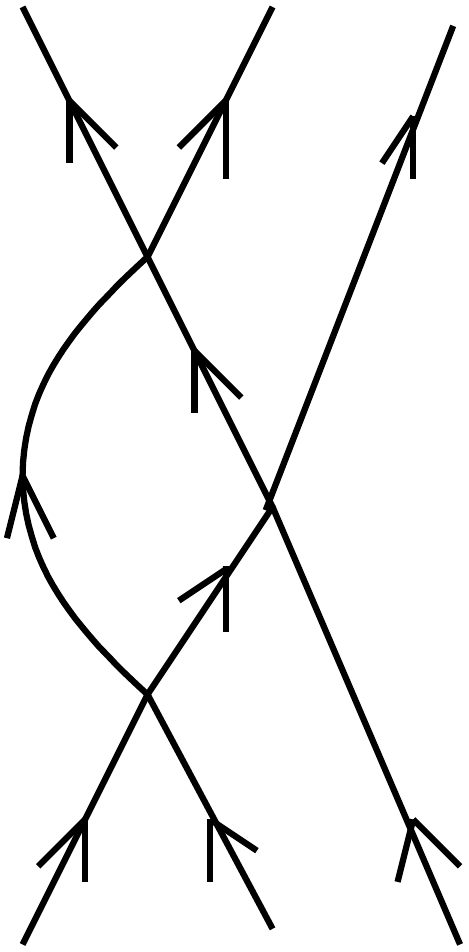}} \ + \  \raisebox{-20pt}{\includegraphics[height = 0.65in]{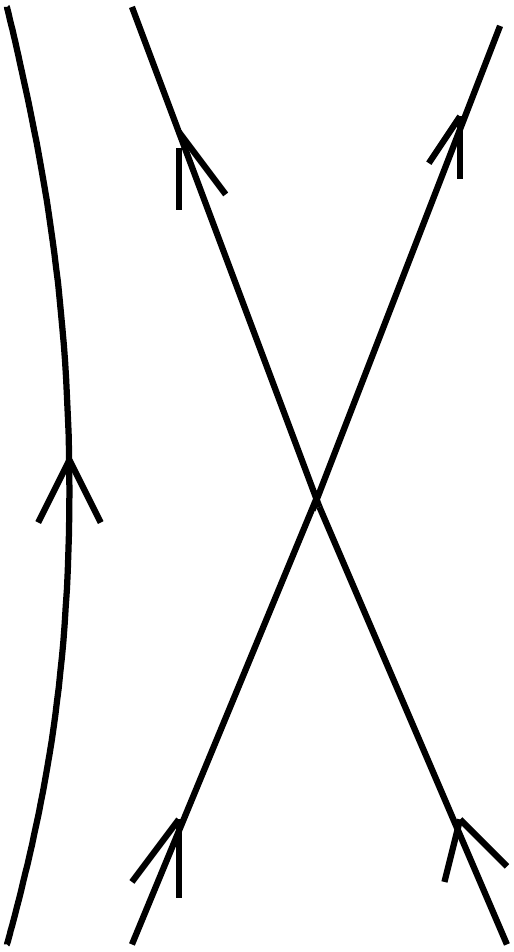}}  = \reflectbox{\raisebox{-20pt}{\includegraphics[height = 0.65in]{skein-left1}}} \  + \  \reflectbox{\raisebox{-20pt}{\includegraphics[height = 0.65in]{skein-left2}}} 

\vspace{0.3cm}
\raisebox{-20pt}{\includegraphics[height = 0.65in]{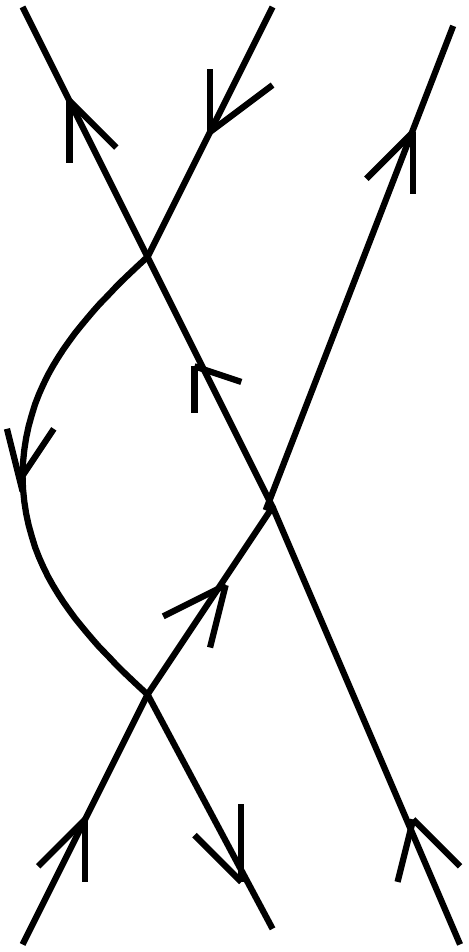}} \ + [n - 3] \  \reflectbox{\raisebox{-20pt}{\includegraphics[height = 0.65in]{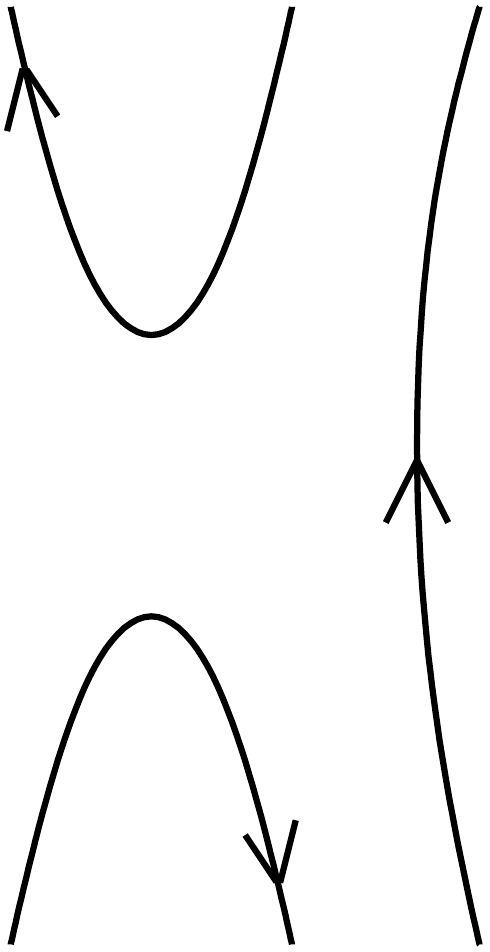}}}  = \reflectbox{\raisebox{-20pt}{\includegraphics[height = 0.65in]{skein-left3}}} \  + [n - 3] \  \raisebox{-20pt}{\includegraphics[height = 0.65in]{skein-left4}} 

\caption{Web skein relations}\label{fig:web skein relations}
\end{figure}

Decompose each crossing in $D$ as explained in Figure~\ref{fig:decomposition}, and form the following linear combination of the MOY evaluations of all $2^c$ resolutions $\Gamma$ of $D$:

\[ R(D) = \sum_{\Gamma} a_\Gamma R(\Gamma), \]
where the coefficients $a_\Gamma \in \Z[q, q^{-1}]$ are given by the rules depicted in Figure~\ref{fig:decomposition}.
\begin{figure}[ht]
\begin{eqnarray*}
R\left( \, \raisebox{-8pt}{\includegraphics[height=0.3in]{crossingpos}}\, \right ) &=& qR \left (\, \raisebox{-8pt}{\includegraphics[height=0.28in]{orienres}} \, \right ) - R \left ( \, \raisebox{-8pt}{\includegraphics[height=0.28in]{vertex}} \, \right ) \\
R \left( \,\raisebox{-8pt}{\includegraphics[height=0.3in]{crossingneg}}\, \right ) &=& q^{-1}R \left ( \, \raisebox{-8pt}{\includegraphics[height=0.28in]{orienres}}\, \right ) - R \left ( \, \raisebox{-8pt}{\includegraphics[height=0.28in]{vertex}} \, \right )
\end{eqnarray*}
\caption{Decomposition of crossings}\label{fig:decomposition}
\end{figure}

It is an enjoyable exercise to verify that $R(D_1) = R(D_2)$, whenever diagrams $D_1$ and $D_2$ differ by a Reidemeister II or III move.
Excluding rightmost terms from the decomposition rules of crossings, we obtain Conway's skein relation: 
\[ R\left(\, \raisebox{-8pt}{\includegraphics[height=0.3in]{crossingneg}}\, \right) - R\left(\, \raisebox{-8pt}{\includegraphics[height=0.3in]{crossingpos}} \,\right ) = (q - q^{-1})R\left(\, \raisebox{-8pt}{\includegraphics[height=0.28in]{orienres}}\, \right). \]
We note that $R(L): = R(D)$ is the regular isotopy version of the $sl(n)$ polynomial of the link $L$, and that it satisfies the following:
\[ R\left(\, \raisebox{-8pt}{\includegraphics[height=0.3in]{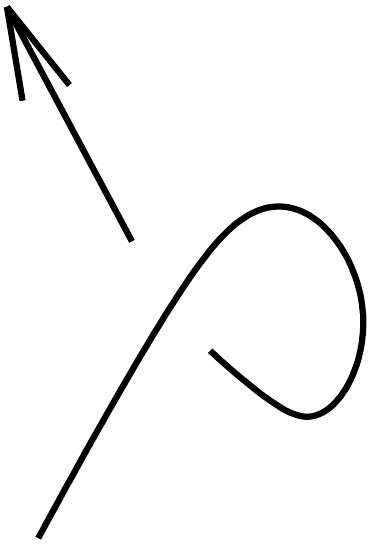}}\, \right)  = q^n R\left ( \ \raisebox{-8pt}{\includegraphics[height = 0.3in]{arc}} \ \right ) \ \  \text{and} \ \  R\left(\, \reflectbox{\raisebox{-8pt}{\includegraphics[height=0.3in]{poskink}}}\, \right)  = q^{-n} R\left ( \ \reflectbox{\raisebox{-8pt}{\includegraphics[height = 0.3in]{arc}}} \ \right ).  \]

\section{Jaeger's model for the Kauffman polynomial}\label{sec:Jaeger}

In the late 80's, Fran\c{c}ois Jaeger found a relationship between the  two-variable Kauffman polynomial and the regular isotopy version of the HOMFLY-PT polynomial. He showed that the Kauffman polynomial of an unoriented link $L$ can be obtained as a weighted sum of HOMFLY-PT polynomials of oriented links associated with $L$. Since this construction is only briefly described in~\cite{K2}, we provide here a thorough exposition of it, which is necessary in order to understand our main Section~\ref{sec:main}. Moreover, we describe Jaeger's model for the $SO(2n)$ Kauffman polynomial by considering the sl(n)-link invariant instead of the HOMFLY-PT polynomial. 

Given an unoriented link diagram $L$, splice some of the crossings of $L$ and orient the resulting link. This results in a \textit{state} for the expansion $\db{L}$. Each state receives a certain weight, according to the following skein relation:

\[
 \left\llbracket \,\raisebox{-8pt}{\includegraphics[scale=0.18]{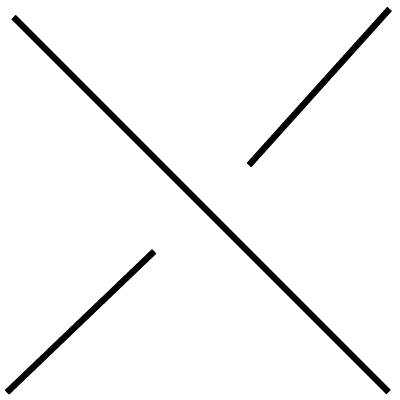}}\, \right\rrbracket = (q -q^{-1}) \left( \left[\raisebox{-6 pt}{\includegraphics[scale=0.3]{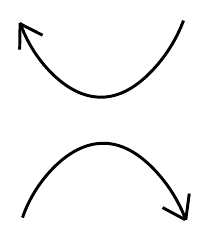}} \right]
- \left[\raisebox{-4 pt}{\includegraphics[scale=0.3]{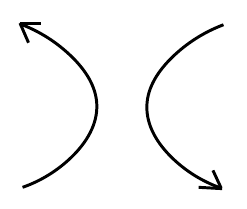}} \right] \right) 
+ \left[ \raisebox{14 pt}{\includegraphics[angle=270, scale=0.3]{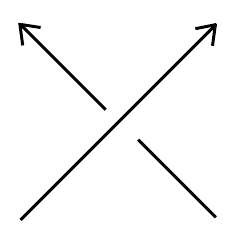}} \right] +\left[ \raisebox{14 pt}{\includegraphics[angle=180, scale=0.3]{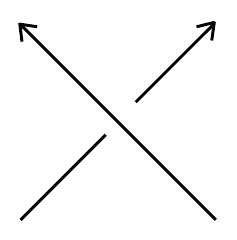}} \right] 
+\left[ \raisebox{-6 pt}{\includegraphics[angle=90, scale=0.3]{crossingpos.pdf}} \right] 
+\left[ \raisebox{-6 pt}{\includegraphics[scale=0.3]{crossingneg.pdf}} \right] \quad (*)
\]

It is important to remark that the formula $(*)$ requires states that are oriented in a globally compatible way as oriented link diagrams. Moreover, observe that the orientation and the weight of a state are determined by how the crossings are spliced. When approaching a crossing by traveling along the understrand, a splicing is obtained by either turning right or left at that crossing. In both cases, the strands of the splicing are oriented according to the direction of the traveling. If the crossing is spliced by turning right, then it receives the weight $q -q^{-1}$, and if it is spliced by turning left, it receives the weight $-(q -q^{-1})$. If a crossing is left unspliced, its weight (in the total weight of the state) is equal to $1$. 

The \textit{weight} $b_{\sigma}$ of a state $\sigma$ is obtained by taking the product of the weights $\pm (q-q^{-1})$ or $1$ according to the skein relation ($*$). Define the \textit{evaluation} of a state $\sigma$ by the formula
\[ [ \sigma ] = (q^{1-n}) ^{\textrm{rot}(\sigma)} R(\sigma),\]
where $\textrm{rot}(\sigma)$ is the \textit{rotation number} of the oriented link diagram $\sigma$, and $R(\sigma)$ is the regular isotopy version of the $sl(n)$ polynomial of $\sigma$.

The rotation number (also called the Whitney degree) of an oriented link diagram is obtained by splicing every crossing according to its orientation, and then adding the rotation numbers of all of the resulting Seifert circles, where a counterclockwise oriented circle contributes a $+1$, and a clockwise oriented circle contributes a $-1$. It is well-known that the rotation number is a regular isotopy invariant for oriented links.

Equipped with the above definitions and conventions, we are ready to state Jaeger's theorem.

\begin{theorem} (Jaeger) The Kauffman polynomial $\db{L}$ of an unoriented link diagram $L$ can be obtained as follows:
\[ \db{L} = \sum_{\sigma} b_\sigma [ \sigma ],\]
where the sum is over all states $\sigma$ associated with $L$ that have globally compatible orientations.
\end{theorem}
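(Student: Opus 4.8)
The plan is to show that the right-hand side, which I will write as $G(L):=\sum_\sigma b_\sigma[\sigma]$, satisfies the four axioms of the Introduction that characterize the $SO(2n)$ Kauffman polynomial $\db{\cdot}$; uniqueness then forces $G(L)=\db{L}$. First one checks $G$ is well-defined: each crossing of $L$ is either left alone or spliced in one of two ways, and each of the finitely many resulting partial resolutions admits finitely many globally compatible orientations, so $G(L)$ is a finite sum in $\Z[q,q^{-1}]$ (a partial resolution with no compatible orientation simply contributes nothing).

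The heart of the argument, and what I expect to be the main obstacle, is \emph{regular isotopy invariance} (axiom 1): $G(D_1)=G(D_2)$ whenever $D_1,D_2$ differ by a single Reidemeister II or III move. I would prove this move-locally. Fix the disk where the move occurs; a state of $D_i$ is a choice of splicing/orientation data on the crossings outside the disk together with compatible local data inside it. Grouping the states of $D_1$ (and of $D_2$) by their ``outside'' data reduces the claim to a finite list of \emph{local} identities: for each admissible boundary orientation, a weighted sum of the evaluations $[\,\cdot\,]$ of the handful of oriented tangles occurring inside the disk on the $D_1$-side must match the corresponding sum on the $D_2$-side. Each such identity is then checked using (i) the regular isotopy invariance of the $sl(n)$-polynomial $R$ under \emph{oriented} Reidemeister II and III moves recorded in Section~\ref{sec:MOY}, (ii) the invariance and additivity of the rotation number under regular isotopy, which makes the prefactors $(q^{1-n})^{\mathrm{rot}}$ agree on the two sides, and (iii) the effect of a splicing that creates a disjoint simple closed curve (it multiplies $R$ by $[n]$ and shifts $\mathrm{rot}$ by $\pm1$). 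The real difficulty here is organizational rather than conceptual: one \emph{unoriented} Reidemeister move lifts to several oriented configurations according to how the participating strands are oriented and which local crossings are spliced, and one must verify that the signs dictated by the ``turn right $\mapsto q-q^{-1}$, turn left $\mapsto -(q-q^{-1})$'' rule cause the non-matching terms to cancel in pairs while the rest correspond bijectively across the move.

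Next I would derive the skein relation (axiom 2). Apply the defining expansion $(*)$ to a positive crossing and, separately, to the negative crossing at the same site of an otherwise fixed diagram. The two expansions involve common ``oriented crossing'' local states — up to a regular isotopy, which is legitimate by the previous step — and differ only in which of the two oriented smoothings is obtained by ``turning right,'' hence only in the sign carried by the two $(q-q^{-1})$-weighted smoothing terms. Subtracting, the oriented-crossing contributions cancel, and after using regular isotopy invariance to re-express the oriented smoothings as the state sums $G(\text{split}_2)$ and $G(\text{split}_1)$ of the corresponding unoriented smoothed diagrams, what remains is exactly $(q-q^{-1})\bigl(G(\text{split}_2)-G(\text{split}_1)\bigr)$, which is axiom 2.

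Finally the two normalization axioms. For the unknot (axiom 3) the crossingless diagram has exactly two states, the circle with each of its two orientations, both of weight $1$; since $R$ of a circle is $[n]$ and its rotation number is $\pm1$, these evaluate to $q^{1-n}[n]$ and $q^{n-1}[n]$, whose sum is $[2n-1]+1$. For the framing relations (axiom 4) apply $(*)$ to a curl on a strand: the states split into the one leaving the self-crossing unspliced (the strand carrying the curl, whose $R$-value is $q^{\pm n}$ times that of the bare strand by the first-Reidemeister behavior of $R$ recorded in Section~\ref{sec:MOY}, with a compensating $\pm1$ shift of $\mathrm{rot}$), the ``good'' splicing returning the bare strand with weight $\pm(q-q^{-1})$, and the ``bad'' splicing returning the strand together with a disjoint circle (contributing a factor $\mp(q-q^{-1})(q^{1-n})^{\pm1}[n]$). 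Summing these scalar contributions over the relevant orientations collapses, after a short computation, to $q^{2n-1}$ (respectively $q^{1-2n}$) times $G(\text{arc})$, as required. With axioms 1--4 verified for $G$, the uniqueness statement of the Introduction gives $G(L)=\db{L}$.
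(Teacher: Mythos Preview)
Your overall strategy---verify that the state sum $G(L)=\sum_\sigma b_\sigma[\sigma]$ satisfies the four defining axioms and invoke uniqueness---is exactly the paper's approach, and your treatments of axioms 1, 3, and 4 are in line with the paper's (the paper carries out Reidemeister~II explicitly and leaves III to the reader, much as you describe).

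There is, however, a genuine error in your argument for axiom~2. You assert that the ``oriented crossing'' local states appearing in the expansion $(*)$ of the positive unoriented crossing are \emph{common, up to regular isotopy}, to those in the expansion of the negative unoriented crossing, so that upon subtraction they cancel. They are not common: for each of the four admissible boundary orientations, one side contributes $[\,\text{oriented positive crossing}\,]$ and the other contributes $[\,\text{oriented negative crossing}\,]$, and an oriented positive crossing is not regularly isotopic to an oriented negative crossing. What is missing is the \emph{Conway identity for} $[\,\cdot\,]$,
\[
\left[\,\text{oriented positive}\,\right]-\left[\,\text{oriented negative}\,\right]=(q-q^{-1})\left[\,\text{oriented smoothing}\,\right],
\]
which the paper isolates as its first step; it follows at once from the Conway relation for $R$ recorded in Section~\ref{sec:MOY} together with the fact that the three local pictures have the same rotation number. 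Once you feed these four Conway contributions back in and combine them with the sign-swapped smoothing terms you already identified, every compatible orientation of each unoriented smoothing appears exactly once with the correct sign, and the difference collapses to $(q-q^{-1})\bigl(G(\text{split}_2)-G(\text{split}_1)\bigr)$. The paper uses this same Conway identity again in its Reidemeister~II computation, so you should expect it to surface in your local verification of axiom~1 as well.
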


\begin{proof}
First note that the Conway identity holds for $[\, \cdot \,]$:
\begin{eqnarray*}
\left[ \raisebox{14 pt}{\includegraphics[angle=270, scale=0.3]{crossingpos.pdf}} \right] - \left[ \raisebox{14 pt}{\includegraphics[angle=270, scale=0.3]{crossingneg.pdf}} \right] &=& q^{(1-n) \textrm{rot}\left(\raisebox{10pt}{\includegraphics[angle=270, scale=0.2]{crossingpos.pdf}}\right)} R\left(\raisebox{14 pt}{\includegraphics[angle=270, scale=0.3]{crossingpos.pdf}}\right) - q^{(1-n)\textrm{rot}\left(\raisebox{10pt}{\includegraphics[angle=270, scale=0.2]{crossingneg.pdf}}\right)} R\left(\raisebox{14 pt}{\includegraphics[angle=270, scale=0.3]{crossingneg.pdf}}\right) \\
&=& q^{(1-n) \textrm{rot}\left(\raisebox{-5pt}{\includegraphics[scale=0.2]{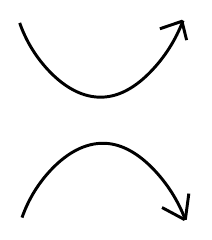}}\right)} \left ( R\left(\raisebox{14 pt}{\includegraphics[angle=270, scale=0.3]{crossingpos.pdf}}\right) - R\left(\raisebox{14 pt}{\includegraphics[angle=270, scale=0.3]{crossingneg.pdf}}\right) \right)\\
&=& (q-q^{-1}) q^{(1-n) \textrm{rot}\left(\raisebox{-5pt}{\includegraphics[scale=0.2]{splita.pdf}}\right)} R\left(\raisebox{-7 pt}{\includegraphics[scale=0.3]{splita.pdf}}\right) \\
&=& (q-q^{-1})\left[\raisebox{-7 pt}{\includegraphics[scale=0.3]{splita.pdf}}\right].
\end{eqnarray*}
Then,
\begin{eqnarray*}
 \left\llbracket \raisebox{-8pt}{\includegraphics[scale=0.18]{crossing.pdf}} \right\rrbracket -
\left\llbracket \raisebox{-8pt}{\includegraphics[angle=90, scale=0.18]{crossing.pdf}} \right\rrbracket
&=& (q-q^{-1}) \left( \left[ \raisebox{-7 pt}{\includegraphics[scale=0.3]{split2or.pdf}} \right]-
\left[ \raisebox{-5 pt}{\includegraphics[scale=0.3]{split1or.pdf}} \right]\right) 
+\left[ \raisebox{14 pt}{\includegraphics[angle=270, scale=0.3]{crossingpos.pdf}} \right]
+\left[ \raisebox{14 pt}{\includegraphics[angle=180, scale=0.3]{crossingneg.pdf}} \right]
+\left[ \raisebox{-6 pt}{\includegraphics[angle=90, scale=0.3]{crossingpos.pdf}} \right]
+\left[ \raisebox{-6 pt}{\includegraphics[scale=0.3]{crossingneg.pdf}} \right] \\
&-&  (q-q^{-1})  \left( \left[ \raisebox{-5 pt}{\includegraphics[angle=90, scale=0.3]{split2or.pdf}} \right]-
\left[ \raisebox{-7 pt}{\includegraphics[angle=90, scale=0.3]{split1or.pdf}} \right] \right) 
-\left[ \raisebox{14 pt}{\includegraphics[angle=270, scale=0.3]{crossingneg.pdf}} \right]
-\left[ \raisebox{14 pt}{\includegraphics[angle=180, scale=0.3]{crossingpos.pdf}} \right]
-\left[ \raisebox{-6 pt}{\includegraphics[angle=90, scale=0.3]{crossingneg.pdf}} \right]
-\left[ \raisebox{-6 pt}{\includegraphics[scale=0.3]{crossingpos.pdf}} \right], 
\end{eqnarray*}
and by the Conway identity, we obtain
\begin{eqnarray*}
\left\llbracket \raisebox{-8pt}{\includegraphics[scale=0.18]{crossing.pdf}} \right\rrbracket -
\left\llbracket \raisebox{-8pt}{\includegraphics[angle=90, scale=0.18]{crossing.pdf}} \right\rrbracket 
&=& (q-q^{-1}) \left( \left[ \raisebox{-7 pt}{\includegraphics[scale=0.3]{split2or.pdf}} \right]
+\left[ \raisebox{14 pt}{\includegraphics[angle=180, scale=0.3]{splita.pdf}} \right] 
+\left[ \raisebox{-7 pt}{\includegraphics[ scale=0.3]{splita.pdf}} \right] 
+\left[ \raisebox{-7 pt}{\includegraphics[angle=90, scale=0.3]{split1or.pdf}} \right]\right)  \\
&-&  (q-q^{-1}) \left( \left[ \raisebox{-5 pt}{\includegraphics[angle=90, scale=0.3]{split2or.pdf}} \right]
+\left[ \raisebox{14 pt}{\includegraphics[angle=270, scale=0.3]{splita.pdf}} \right] 
+\left[ \raisebox{-5 pt}{\includegraphics[scale=0.3]{split1or.pdf}} \right]
+\left[ \raisebox{-7 pt}{\includegraphics[angle=90, scale=0.3]{splita.pdf}} \right] \right) \\
&=& (q-q^{-1}) \left( \left\llbracket \raisebox{-7 pt}{\includegraphics[scale=0.3]{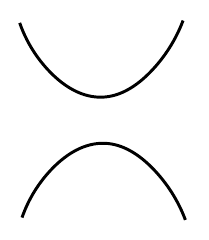}} \right\rrbracket-
\left\llbracket \raisebox{-5 pt}{\includegraphics[scale=0.3]{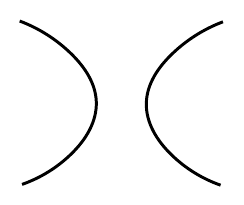}} \right\rrbracket \right).
\end{eqnarray*}
Observe that \[\left[\raisebox{-8pt}{\includegraphics[height=.3in]{circle}} \right] = q^{(1-n)\textrm{rot}  \left( \raisebox{-6pt}{\includegraphics[height=.25in]{circle}}\right) } R\left(\raisebox{-8pt}{\includegraphics[height=.3in]{circle}}\right) = q^{1-n}[n],\] 
\[\left[\raisebox{-8pt}{\reflectbox{\includegraphics[height=.3in]{circle}} } \right] = q^{(1-n)\textrm{rot}  \left( \raisebox{-6pt}{\reflectbox{\includegraphics[height=.25in]{circle}}}\right) } R\left(\raisebox{-8pt}{\reflectbox{\includegraphics[height=.3in]{circle}}}\right) = q^{n-1}[n],\]
and, therefore we have
\[ \left \llbracket \raisebox{-8pt}{\includegraphics[height=.3in]{unknot1}} \right \rrbracket = \left[\raisebox{-8pt}{\includegraphics[height=.3in]{circle}}  \right]  + \left[\raisebox{-8pt}{\reflectbox{\includegraphics[height=.3in]{circle}} } \right] = (q^{1-n} + q^{n-1})[n] =  \frac{q^{2n-1}-q^{1-2n}}{q-q^{-1}}+1.\]
Moreover,
\begin{eqnarray*} 
\left[ \raisebox{-11 pt}{\includegraphics[scale=0.3]{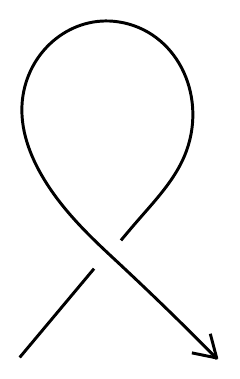}} \right] &=&
q^{(1-n)\text{rot} \left( \raisebox{-7 pt}{\includegraphics[scale=0.2]{kinkpos.pdf}} \right)} 
R\left(\raisebox{-11 pt}{\includegraphics[scale=0.3]{kinkpos.pdf}}\right) 
= q^{(1-n) \left (1+\text{rot} \left( \raisebox{8 pt}{\includegraphics[angle=180, scale=0.35]{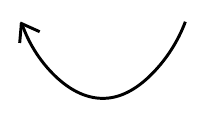}} \right)\right)} q^n R \left( \raisebox{10 pt}{\includegraphics[angle=180, scale=0.5]{line2.pdf}}\right) \\
&=& q \left[ \raisebox{10pt}{\includegraphics[angle=180, scale=0.5]{line2.pdf}} \right].
\end{eqnarray*}

\begin{eqnarray*} 
\left[ \raisebox{-11 pt}{\includegraphics[scale=0.3]{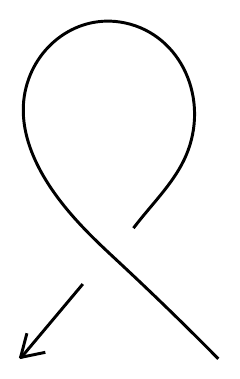}} \right] &=&
q^{(1-n){\text{rot} \left( \raisebox{-7 pt}{\includegraphics[scale=0.2]{kinkpos1.pdf}} \right)} }
R\left(\raisebox{-11 pt}{\includegraphics[scale=0.3]{kinkpos1.pdf}}\right) 
= q^{(1-n)\left(-1 + \text{rot} \left( \raisebox{8 pt}{\includegraphics[angle=180, scale=0.35]{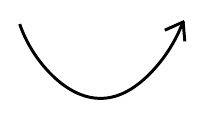}} \right)\right)} q^n R \left(\raisebox{10 pt}{\includegraphics[angle=180, scale=0.5]{line1.pdf}}\right) \\
&=& q^{2n-1}\left[ \raisebox{10pt}{\includegraphics[angle=180, scale=0.5]{line1.pdf}} \right].
\end{eqnarray*}
Therefore,

\begin{eqnarray*}
 \left\llbracket \raisebox{-15 pt}{\includegraphics[scale=0.3]{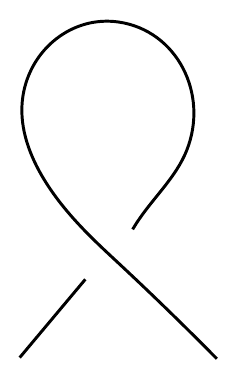}} \right\rrbracket 
& =&  (q-q^{-1}) \left( \left[ \raisebox{20 pt}{\includegraphics[angle = 180, scale=0.3]{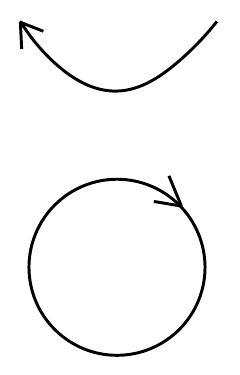}} \right] 
	-\left[ \raisebox{20 pt}{\includegraphics[angle = 180,scale=0.3]{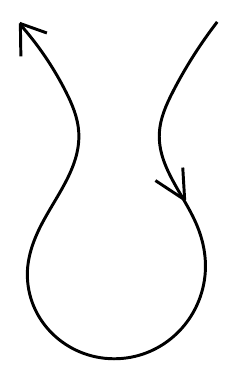}} \right]  \right)
	+ \left[ \raisebox{-15 pt}{\includegraphics[ scale=0.3]{kinkpos.pdf}} \right] 
	+ \left[ \raisebox{-15 pt}{\includegraphics[scale=0.3]{kinkpos1.pdf}} \right] \\
	&=& (q-q^{-1})\left ( q^{n-1}[n] \left [\raisebox{10pt}{\includegraphics[angle=180, scale=0.5]{line2.pdf}} \right] - \left [\raisebox{10pt}{\includegraphics[angle=180, scale=0.5]{line2.pdf}} \right]\right)\\
	 &+& q \left[ \raisebox{10pt}{\includegraphics[angle=180, scale=0.5]{line2.pdf}} \right] + q^{2n-1}\left[ \raisebox{10pt}{\includegraphics[angle=180, scale=0.5]{line1.pdf}} \right] \\
	 &=& q^{2n-1} \left[ \raisebox{10pt}{\includegraphics[angle=180, scale=0.5]{line2.pdf}} \right] +  q^{2n-1} \left[ \raisebox{10pt}{\includegraphics[angle=180, scale=0.5]{line1.pdf}} \right] = q^{2n-1} \left\llbracket  \raisebox{-3pt}{\includegraphics[width=.45in]{arc2}} \,\,\right\rrbracket .
	 \end{eqnarray*}
Similarly, one can show that 
$\left\llbracket \raisebox{-15 pt}{\includegraphics[scale=0.3]{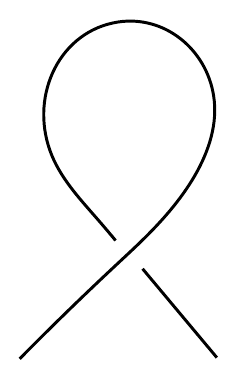}} \right\rrbracket 
= q^{1-2n} \left\llbracket \raisebox{-5pt}{\includegraphics[width=.45in]{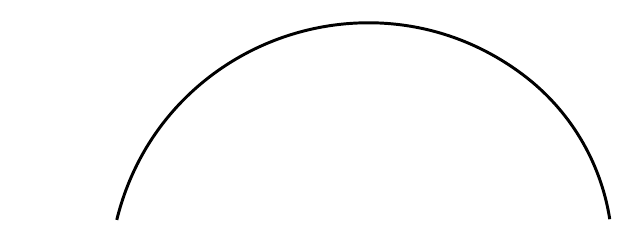}}\,\, \right\rrbracket$. 

In remains to show that $\left\llbracket \, \cdot \, \right\rrbracket$ is a regular isotopy invariant for unoriented links.
\begin{eqnarray*}
\left\llbracket \raisebox{17 pt}{\includegraphics[,angle = 180,scale=.25]{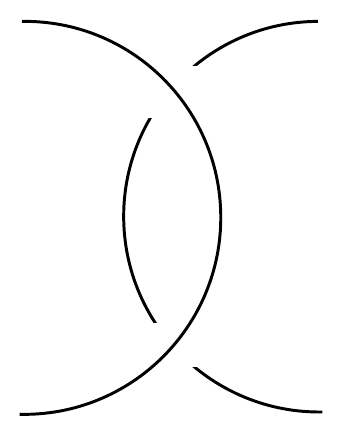}} \right\rrbracket
&=& (q-q^{-1}) \left( \left[ \raisebox{-15 pt}{\includegraphics[scale=.3]{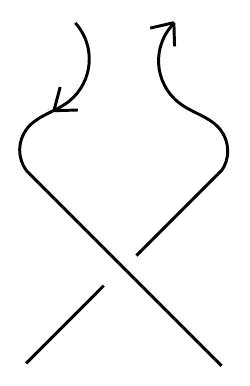}} \right] -
	\left[ \raisebox{-15 pt}{\includegraphics[scale=.3]{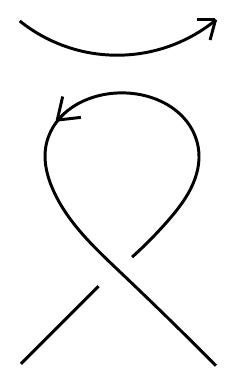}} \right] + 
	\left[ \raisebox{-15 pt}{\includegraphics[scale=.3]{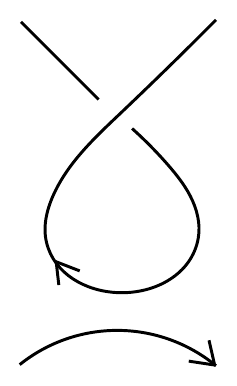}} \right] -
	\left[ \raisebox{-15 pt}{\includegraphics[scale=.3]{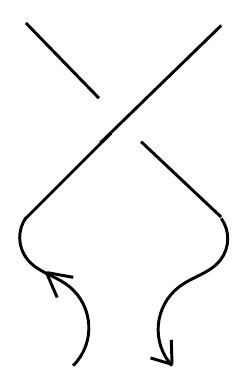}} \right] \right) \\
&&	+\left[ \raisebox{-15 pt}{\includegraphics[scale=.3]{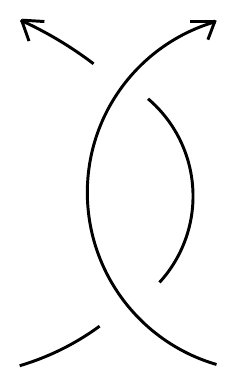}} \right] 
	+ \left[ \raisebox{-15 pt}{\includegraphics[scale=.3]{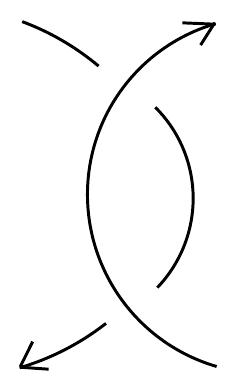}} \right] 
	+\left[ \raisebox{-15 pt}{\includegraphics[scale=.3]{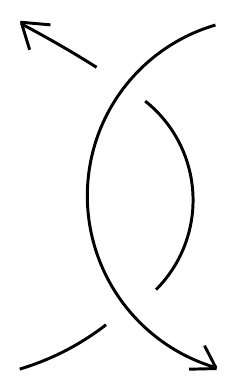}} \right] 
	+\left[ \raisebox{-15 pt}{\includegraphics[scale=.3]{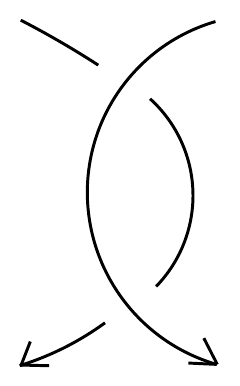}} \right] \\
& = & (q-q^{-1}) \left( \left[ \raisebox{15 pt}{\includegraphics[angle =270, scale=0.37]{crossingpos.pdf}} 			\right] 
	-q\left[ \raisebox{-10 pt}{\includegraphics[scale=0.37]{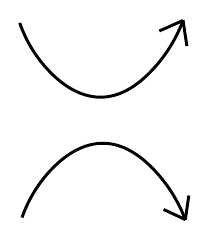}} \right] 
	+q^{-1} \left[ \raisebox{-10 pt}{\includegraphics[scale=0.37]{split2or1.pdf}} \right] 
	- \left[ \raisebox{15 pt}{\includegraphics[angle =270, scale=0.37]{crossingneg.pdf}} \right] 		\right) \\
 &&	+\left[ \raisebox{-10 pt}{\includegraphics[angle=90, scale=.4]{split2or1}} \right] 
	+ \left[ \raisebox{-10 pt}{\includegraphics[angle=90, scale=.4]{split2or}} \right] 
	+\left[ \raisebox{-10 pt}{\includegraphics[scale=.4]{split1or}} \right] 
	+\left[ \raisebox{14 pt}{\includegraphics[angle=270, scale=.4]{split2or1}} \right]  \\
& = & (q-q^{-1}) \left( \left[ \raisebox{15 pt}{\includegraphics[angle =270, scale=0.37]{crossingpos.pdf}} 			\right] 
	- \left[ \raisebox{15 pt}{\includegraphics[angle =270, scale=0.37]{crossingneg.pdf}} \right]
	-q\left[ \raisebox{-10 pt}{\includegraphics[scale=0.37]{split2or1.pdf}} \right] 
	+q^{-1} \left[ \raisebox{-10 pt}{\includegraphics[scale=0.37]{split2or1.pdf}} \right] \right) 
+ \left\llbracket \raisebox{-9 pt}{\includegraphics[scale=.37]{split1}} \right\rrbracket\\
& = & \left\llbracket \raisebox{-9 pt}{\includegraphics[scale=.37]{split1}} \right\rrbracket, \, \text{by the Conway identity for $[ \, \cdot \,]$}.
\end{eqnarray*}

The invariance of $\left\llbracket \, \cdot \, \right\rrbracket$ under the Reidemeister III move is verified in a similar fashion, and we leave the details to the reader.
\end{proof}

\section{The $SO(2n)$ Kauffman polynomial via planar $4$-valent graphs}\label{sec:main}

We seek to construct a state summation model for the $SO(2n)$ Kauffman polynomial, that works in much the same way as the MOY model works for the $sl(n)$ polynomial. Moreover, we want to derive such a state model by implementing the MOY construction into Jaeger's theorem. Therefore, the states corresponding to an unoriented link diagram $L$ will be unoriented 4-valent graphs obtained by resolving a crossing of $L$ in one of the following ways:
\[  \raisebox{-8pt}{\includegraphics[height=.3in]{split2}}\,,\quad \raisebox{-8pt}{\includegraphics[height=.3in]{split1}}\,,\quad \raisebox{-8pt}{\includegraphics[height=.3in]{vertnc}}\]
and we want to find some $A,B,C \in \mathbb{Z}[q,q^{-1}]$, such that

\begin{eqnarray}\label{eq:skeinrel}
 \left \llbracket  \,\raisebox{-8pt}{\includegraphics[height=.3in]{crossing} }\right \rrbracket = 
	 A \left \llbracket \, \raisebox{-8pt}{\includegraphics[height=.3in]{split2} }\right \rrbracket +
	 B \left \llbracket \,\raisebox{-8pt}{\includegraphics[height=.3in]{split1} }\right \rrbracket +
	 C \left \llbracket \, \raisebox{-8pt}{\includegraphics[height=.3in]{vertnc} }\right \rrbracket  .  
\end{eqnarray}
The state model that we wish to construct requires a consistent method to evaluate closed, unoriented $4$-valent graphs (the states associated with $L$). 

To this end, we note that by implementing the MOY state summation into Jaeger's model requires the bracket evaluation $[\Gamma]$, where $\Gamma$ is an oriented $4$-valent planar graph whose vertices are crossing-type oriented. We define
\begin{eqnarray}\label{eq:bracket}
 [\Gamma] := (q^{1-n})^{\textrm{rot} (\Gamma)}R(\Gamma), 
 \end{eqnarray}
where $\textrm{rot} (\Gamma)$, the \textit{rotation number} of such a graph $\Gamma$, is the sum of the rotation numbers of the disjoint oriented circles obtained by splicing each vertex of $\Gamma$ according to the orientation of its edges:
 \[\raisebox{-8pt}{\includegraphics[height=0.3in]{vertex}} \longrightarrow \raisebox{-8pt}{\includegraphics[height=0.3in]{orienres}}. \]  

We will regard the equation ~\eqref{eq:bracket} as a skein relation, as explained below:
\begin{eqnarray}\label{eq:skein-bracket}
 && \left[\, \raisebox{-8pt}{\includegraphics[height=0.3in]{vertex}}\, \right] = (q^{1-n})^{\textrm{rot} \left (\, \raisebox{-8pt}{\includegraphics[height=0.25in]{vertex}} \,\right )}R \left(\, \raisebox{-8pt}{\includegraphics[height=0.3in]{vertex}}\, \right) = (q^{1-n})^{\textrm{rot} \left (\, \raisebox{-8pt}{\includegraphics[height=0.25in]{orienres}} \,\right )}R \left(\, \raisebox{-8pt}{\includegraphics[height=0.3in]{vertex}}\, \right). 
  \end{eqnarray}

Jaeger's theorem implies that
$ \left \llbracket \,\raisebox{-8pt}{\includegraphics[height=.3in]{split2} }\right \rrbracket = \left [ \raisebox{-8pt}{\includegraphics[height=.3in]{splita}} \right ] 
		+ \left [ \raisebox{-8pt}{\includegraphics[height=.3in]{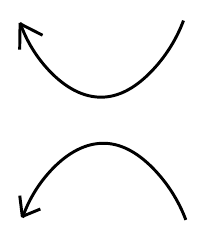}} \right ]
		+ \left [ \raisebox{-8pt}{\includegraphics[height=.3in]{split2or}} \right ]
		+ \left [ \raisebox{-8pt}{\includegraphics[height=.3in]{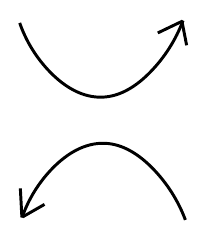}} \right ],  $
and to have a consistent construction, the evaluation $\left \llbracket \,\raisebox{-8pt}{\includegraphics[height=.3in]{vertnc} }\right \rrbracket$ will contain the bracket evaluations $\left[\, \raisebox{-8pt}{\includegraphics[height=0.3in]{vertex}}\, \right]$ for all such orientations of the vertex.

To determine what the coefficients $A,B,$ and $C$ must be, we compute $\left \llbracket \,\raisebox{-8pt}{\includegraphics[height=.3in]{crossing} }\right \rrbracket$ via Jaeger's model, and throughout the process, we evaluate the resulting oriented link diagrams using the MOY construction for the $sl(n)$ polynomial, $R$.
\begin{eqnarray*}
	\left \llbracket \,\raisebox{-8pt}{\includegraphics[height=.3in]{crossing} }\right \rrbracket 
	&=& (q-q^{-1}) \left ( \left [\raisebox{-8pt}{\includegraphics[height=.3in]{split2or}} \right ] 
		- \left [\raisebox{-8pt}{\includegraphics[height=.3in]{split1or}} \right ] \right ) 
		+ \left [\raisebox{-8pt}{\includegraphics[height=.3in]{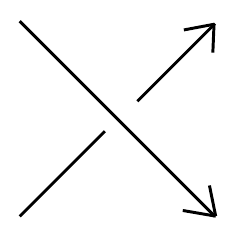}} \right ]
		+ \left [\raisebox{-8pt}{\includegraphics[height=.3in]{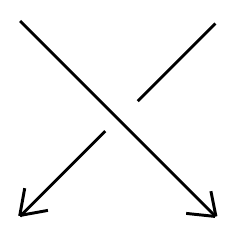}} \right ]
		+ \left [\raisebox{-8pt}{\includegraphics[height=.3in]{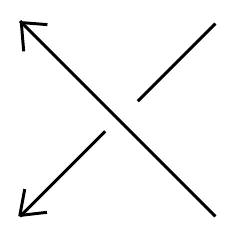}} \right ]
		+ \left [\raisebox{-8pt}{\includegraphics[height=.3in]{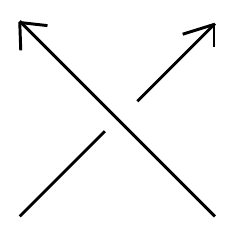}} \right ]\\
	&=& (q-q^{-1}) \left ( \left [\raisebox{-8pt}{\includegraphics[height=.3in]{split2or}} \right ] 
		- \left [\raisebox{-8pt}{\includegraphics[height=.3in]{split1or}} \right ] \right )\\
		&&+ (q^{1-n})^{\textrm{rot} \left (\raisebox{-5pt}{\includegraphics[height=.2in]{crossinga}}\right )}
			R \left (\raisebox{-8pt}{\includegraphics[height=.3in]{crossinga}}\right )	
		+ (q^{1-n})^{\textrm{rot} \left (\raisebox{-5pt}{\includegraphics[height=.2in]{crossingb}}\right )}
			R \left (\raisebox{-8pt}{\includegraphics[height=.3in]{crossingb}}\right )\\
 		&&+ (q^{1-n})^{\textrm{rot} \left (\raisebox{-5pt}{\includegraphics[height=.2in]{crossingc}}\right )}
			R \left (\raisebox{-8pt}{\includegraphics[height=.3in]{crossingc}}\right )	
		+ (q^{1-n})^{\textrm{rot} \left (\raisebox{-5pt}{\includegraphics[height=.2in]{crossingd}}\right )}
			R \left (\raisebox{-8pt}{\includegraphics[height=.3in]{crossingd}}\right ).	 
\end{eqnarray*} 
Employing the skein relations in Figure~\ref{fig:decomposition}, we have
\begin{eqnarray*}
\left \llbracket \,\raisebox{-8pt}{\includegraphics[height=.3in]{crossing} }\right \rrbracket 
	&=& (q-q^{-1}) \left ( \left [\raisebox{-8pt}{\includegraphics[height=.3in]{split2or}} \right ] 
		- \left [\raisebox{-8pt}{\includegraphics[height=.3in]{split1or}} \right ] \right ) \\
		&&+ (q^{1-n})^{\textrm{rot} \left (\raisebox{-5pt}{\includegraphics[height=.2in]{splita}}\right )}
			\left (qR\left (\raisebox{-8pt}{\includegraphics[height=.3in]{splita}}\right ) 
			- R\left (\raisebox{-8pt}{\includegraphics[height=.3in]{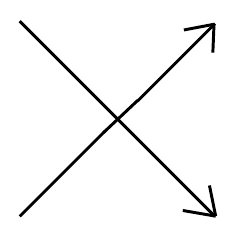}} \right ) \right )	\\
		&&+ (q^{1-n})^{\textrm{rot} \left (\raisebox{-5pt}{\includegraphics[height=.2in]{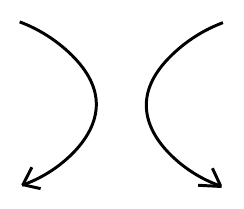}}\right )}
			\left (q^{-1} R\left (\raisebox{-8pt}{\includegraphics[height=.3in]{splitb}}\right ) 
			- R\left (\raisebox{-8pt}{\includegraphics[height=.3in]{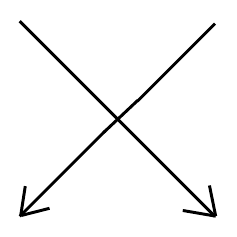}} \right ) \right ) \\
		&&+ (q^{1-n})^{\textrm{rot} \left (\raisebox{-5pt}{\includegraphics[height=.2in]{splitc}}\right )}
			\left (qR\left (\raisebox{-8pt}{\includegraphics[height=.3in]{splitc}}\right ) 
			- R\left (\raisebox{-8pt}{\includegraphics[height=.3in]{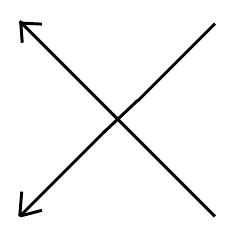}} \right ) \right ) \\
		&&+ (q^{1-n})^{\textrm{rot} \left (\raisebox{-5pt}{\includegraphics[height=.2in]{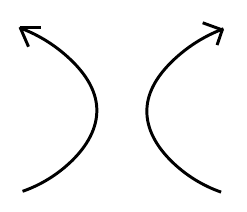}}\right )}
			\left (q^{-1} R\left (\raisebox{-8pt}{\includegraphics[height=.3in]{splitd}}\right ) 
			- R\left (\raisebox{-8pt}{\includegraphics[height=.3in]{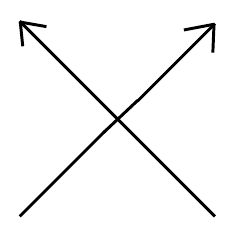}} \right ) \right ). 
\end{eqnarray*}			
Making use of the skein relation~\eqref{eq:skein-bracket}, we obtain
\begin{eqnarray*}
\left \llbracket \,\raisebox{-8pt}{\includegraphics[height=.3in]{crossing} }\right \rrbracket 
	&=& (q-q^{-1})\left ( \left [\raisebox{-8pt}{\includegraphics[height=.3in]{split2or}} \right ] 
		- \left [\raisebox{-8pt}{\includegraphics[height=.3in]{split1or}} \right ] \right )\\
		&&+ q\left [\raisebox{-8pt}{\includegraphics[height=.3in]{splita}} \right ] 
		- \left [\raisebox{-8pt}{\includegraphics[height=.3in]{verta}} \right ] 
		+ q^{-1} \left [\raisebox{-8pt}{\includegraphics[height=.3in]{splitb}} \right ] 
		- \left [\raisebox{-8pt}{\includegraphics[height=.3in]{vertb}} \right ] \\
		&&+ q \left [\raisebox{-8pt}{\includegraphics[height=.3in]{splitc}} \right ] 
		- \left [\raisebox{-8pt}{\includegraphics[height=.3in]{vertc}} \right ]
		+ q^{-1}\left [\raisebox{-8pt}{\includegraphics[height=.3in]{splitd}} \right ] 
		- \left [\raisebox{-8pt}{\includegraphics[height=.3in]{vertd}} \right ] \\
	&=& q \left ( \left [ \raisebox{-8pt}{\includegraphics[height=.3in]{splita}} \right ] 
		+ \left [ \raisebox{-8pt}{\includegraphics[height=.3in]{splitc}} \right ]
		+ \left [ \raisebox{-8pt}{\includegraphics[height=.3in]{split2or}} \right ]
		+ \left [ \raisebox{-8pt}{\includegraphics[height=.3in]{split2ro}} \right ] \right )\\
		&&+  q^{-1} \left ( \left [ \raisebox{-8pt}{\includegraphics[height=.3in]{splitb}} \right ] 
		+ \left [ \raisebox{-8pt}{\includegraphics[height=.3in]{splitd}} \right ]
		+ \left [ \raisebox{-8pt}{\includegraphics[height=.3in]{split1or}} \right ]
		+ \left [ \raisebox{-8pt}{\includegraphics[height=.3in]{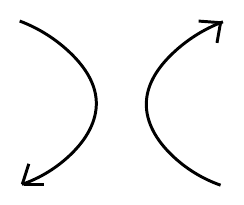}} \right ] \right )\\
		&& - q\left [\raisebox{-8pt}{\includegraphics[height=.3in]{split2ro}}\right ] 
		- q\left[\raisebox{-8pt}{\includegraphics[height=.3in]{split1or}}\right ] - q^{-1}\left [\raisebox{-8pt}{\includegraphics[height=.3in]{split1ro}}\right ] 
		- q^{-1}\left[\raisebox{-8pt}{\includegraphics[height=.3in]{split2or}}\right ] \\
		&& -\left ( \left [\raisebox{-8pt}{\includegraphics[height=.3in]{verta}}\right ]
		+ \left [\raisebox{-8pt}{\includegraphics[height=.3in]{vertb}}\right ]
		+ \left [\raisebox{-8pt}{\includegraphics[height=.3in]{vertc}}\right ]
		+ \left [\raisebox{-8pt}{\includegraphics[height=.3in]{vertd}}\right ] \right ).
		\end{eqnarray*}
Therefore, we have	
\begin{eqnarray*}
	\left \llbracket\, \raisebox{-8pt}{\includegraphics[height=.3in]{crossing} }\right \rrbracket 
	&=& q \left \llbracket \,\raisebox{-8pt}{\includegraphics[height=.3in]{split2}}\, \right \rrbracket
		+ q^{-1} \left \llbracket \,\raisebox{-8pt}{\includegraphics[height=.3in]{split1}}\, \right \rrbracket \\
		&& - q\left [\raisebox{-8pt}{\includegraphics[height=.3in]{split2ro}}\right ] 
		- q\left[\raisebox{-8pt}{\includegraphics[height=.3in]{split1or}}\right ] - q^{-1}\left [\raisebox{-8pt}{\includegraphics[height=.3in]{split1ro}}\right ] 
		- q^{-1}\left[\raisebox{-8pt}{\includegraphics[height=.3in]{split2or}}\right ]\\
		&&-\left ( \left [\raisebox{-8pt}{\includegraphics[height=.3in]{verta}}\right ]
		+ \left [\raisebox{-8pt}{\includegraphics[height=.3in]{vertb}}\right ]
		+ \left [\raisebox{-8pt}{\includegraphics[height=.3in]{vertc}}\right ]
		+ \left [\raisebox{-8pt}{\includegraphics[height=.3in]{vertd}}\right ] \right ).
		\end{eqnarray*}

Comparing the last equality with equation~\eqref{eq:skeinrel}, we see that in order to work with a certain evaluation $\left \llbracket\, \raisebox{-6pt}{\includegraphics[height=.25in]{vertnc} }\right \rrbracket $ for an unoriented vertex, we must also take in consideration \textit{alternating orientations} for edges meeting at a vertex, and define the \textit{bracket} of an \textit{alternating oriented vertex} as follows:
\begin{eqnarray}
\left [\raisebox{-8pt}{\includegraphics[height=.3in]{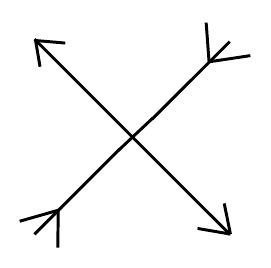}} \right ] := q\left [\raisebox{-8pt}{\includegraphics[height=.3in]{split1or}} \right ] + q^{-1} \left [\raisebox{-8pt}{\includegraphics[height=.3in]{split2or}} \right ].
\end{eqnarray}

The above computations also imply the need of the following definition:
\[
 \left \llbracket \,\raisebox{-8pt}{\includegraphics[height=.3in]{vertnc} }\right \rrbracket := \left [\raisebox{-8pt}{\includegraphics[height=.3in]{verta}} \right ] + \left [\raisebox{-8pt}{\includegraphics[height=.3in]{vertb}} \right ] + \left [\raisebox{-8pt}{\includegraphics[height=.3in]{vertc}} \right ] + \left [\raisebox{-8pt}{\includegraphics[height=.3in]{vertd}} \right ] + \left [\raisebox{-8pt}{\includegraphics[height=.3in]{hyp1}} \right ] + \left [\raisebox{-8pt}{\includegraphics[height=.3in]{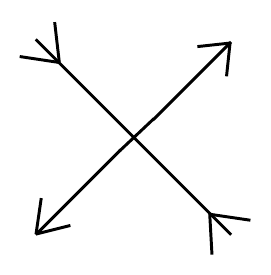}} \right ].
\]
Implementing the above definitions into our previous computations, we obtain

\begin{eqnarray} \label{eqn:skein-def}
 \left \llbracket \,\raisebox{-8pt}{\includegraphics[height=.3in]{crossing} }\right \rrbracket  = q \left \llbracket\, \raisebox{-8pt}{\includegraphics[height=.3in]{split2} }\right \rrbracket
		+ q^{-1} \left \llbracket \,\raisebox{-8pt}{\includegraphics[height=.3in]{split1} }\right \rrbracket 
	 	-  \left \llbracket \,\raisebox{-8pt}{\includegraphics[height=.3in]{vertnc} }\right \rrbracket .
\end{eqnarray} 
Therefore, $A=q, B=q^{-1}, \, \text{and}\, \, C=-1$.

We have seen that the implementation of the MOY state model into Jaeger's state summation requires \textit{balanced} oriented $4$-valent graphs (in the sense that the total degree of a vertex is zero), with vertices being either crossing-type oriented or alternating oriented.


\begin{proposition}\label{R0}
The following identity holds:
\[\left \llbracket \raisebox{-8pt}{\includegraphics[height=.3in]{unknot1}} \right \rrbracket = [2n-1]+1.\]
\end{proposition}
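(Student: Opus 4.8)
The plan is to note that the diagram in the statement is simply an unknotted circle with no crossings, so the state-sum skein relation \eqref{eqn:skein-def} is never invoked: the only resolution of this diagram is the bare loop itself, regarded as a closed unoriented $4$-valent graph with no vertices. To evaluate it I would use the same rule that underlies Jaeger's theorem, namely that $\llbracket\cdot\rrbracket$ of a closed unoriented $4$-valent graph equals $\sum_\Gamma [\Gamma]$ taken over all of its globally compatible edge-orientations. A bare loop admits exactly two such orientations --- the counterclockwise circle and the clockwise circle --- each entering with weight $b_\sigma = 1$, since no crossing is spliced.

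First I would evaluate the two brackets via the defining formula $[\Gamma] = (q^{1-n})^{\textrm{rot}(\Gamma)} R(\Gamma)$. The counterclockwise circle has rotation number $+1$ and the clockwise circle has rotation number $-1$, while in both cases $R$ of a single embedded circle is $[n]$ by the first web skein relation of Figure~\ref{fig:web skein relations}. This gives
\[ \left\llbracket \raisebox{-8pt}{\includegraphics[height=.3in]{unknot1}} \right\rrbracket = q^{1-n}[n] + q^{n-1}[n] = (q^{1-n}+q^{n-1})[n], \]
which is exactly the computation already performed in the proof of Jaeger's theorem.

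Finally I would simplify, substituting $[n] = (q^n-q^{-n})/(q-q^{-1})$:
\[ (q^{1-n}+q^{n-1})\,\frac{q^n-q^{-n}}{q-q^{-1}} = \frac{(q^{2n-1}-q^{1-2n}) + (q-q^{-1})}{q-q^{-1}} = [2n-1]+1, \]
which is the asserted identity. There is essentially no obstacle here; the only point that deserves attention is the bookkeeping --- one must remember that a bare loop contributes \emph{two} oriented states rather than one, and that each contributes the same $R$-value $[n]$ but with the opposite rotation-number prefactor $q^{\pm(1-n)}$. Once that is in place the conclusion is a one-line algebraic simplification.
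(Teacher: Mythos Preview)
Your proof is correct and is essentially the same as the paper's: the paper simply invokes Jaeger's theorem, whose proof already contains precisely the computation you carry out (summing the two oriented circles, each contributing $q^{\pm(1-n)}[n]$, and simplifying $(q^{1-n}+q^{n-1})[n]$ to $[2n-1]+1$).
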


\begin{proof}This identity holds by Jaeger's theorem.
 \end{proof}


\begin{proposition}\label{R1}
 The following graph skein relation holds:
\[
\left \llbracket \raisebox{-10pt}{\includegraphics[height=.35in]{kinked}} \,\right \rrbracket = ([2n-2]+[2])\left \llbracket \raisebox{-3pt}{\includegraphics[width=.4in]{arc2}}\, \right \rrbracket . \] 
\end{proposition}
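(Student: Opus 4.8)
The plan is to derive Proposition~\ref{R1} from the crossing skein relation~\eqref{eqn:skein-def}, the value of the unknotted circle (Proposition~\ref{R0}), and the Reidemeister~I evaluations already established inside the proof of Jaeger's theorem. The starting observation is that the kinked graph is exactly the flat resolution --- the $\left\llbracket\,\raisebox{-6pt}{\includegraphics[height=.22in]{vertnc}}\,\right\rrbracket$--type term --- produced by~\eqref{eqn:skein-def} when that relation is applied at the self--crossing of a curl $\kappa$. Since the two smoothings of $\kappa$ undo the curl in the two usual ways, one of them is the bare arc and the other is the bare arc together with a disjoint circle; so I would first record that $\left\llbracket\,D\sqcup\bigcirc\,\right\rrbracket=([2n-1]+1)\left\llbracket\,D\,\right\rrbracket$ --- which follows from Proposition~\ref{R0} together with the structure of Jaeger's expansion, the states of $D\sqcup\bigcirc$ being precisely those of $D$ with the extra circle oriented in one of its two ways, the weights $b_\sigma$ unchanged and the evaluations $[\,\cdot\,]$ multiplicative under disjoint union --- and that $\left\llbracket\,\kappa\,\right\rrbracket=q^{2n-1}\left\llbracket\,\raisebox{-3pt}{\includegraphics[width=.4in]{arc2}}\,\right\rrbracket$ by the proof of Jaeger's theorem.

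Next I would apply~\eqref{eqn:skein-def} at the crossing of $\kappa$, obtaining $q^{2n-1}\left\llbracket\,\raisebox{-3pt}{\includegraphics[width=.4in]{arc2}}\,\right\rrbracket = q([2n-1]+1)\left\llbracket\,\raisebox{-3pt}{\includegraphics[width=.4in]{arc2}}\,\right\rrbracket + q^{-1}\left\llbracket\,\raisebox{-3pt}{\includegraphics[width=.4in]{arc2}}\,\right\rrbracket - \left\llbracket\,\raisebox{-10pt}{\includegraphics[height=.35in]{kinked}}\,\right\rrbracket$, and solve for the kinked graph. Regrouping, its coefficient against $\left\llbracket\,\raisebox{-3pt}{\includegraphics[width=.4in]{arc2}}\,\right\rrbracket$ is $q([2n-1]+1)+q^{-1}-q^{2n-1}=\bigl(q[2n-1]-q^{2n-1}\bigr)+(q+q^{-1})$, and the only computation needed is
\[
q[2n-1]-q^{2n-1}=\frac{q^{2n}-q^{\,2-2n}}{q-q^{-1}}-\frac{q^{2n}-q^{\,2n-2}}{q-q^{-1}}=\frac{q^{\,2n-2}-q^{\,2-2n}}{q-q^{-1}}=[2n-2],
\]
after which the coefficient is $[2n-2]+[2]$, as claimed. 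I would also note that running the same argument with the mirror curl (with $q^{1-2n}$ replacing $q^{2n-1}$ and the two smoothings interchanged) gives the identical scalar, so the choice of curl is immaterial.

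The step I expect to be the main obstacle is a bookkeeping one: matching the two smoothings of $\kappa$ to the coefficients $q$ and $q^{-1}$ correctly --- that is, checking that the smoothing which produces (arc)$\,\sqcup\,\bigcirc$ is the one weighted by $q$ precisely when $\left\llbracket\,\kappa\,\right\rrbracket=q^{2n-1}\left\llbracket\,\raisebox{-3pt}{\includegraphics[width=.4in]{arc2}}\,\right\rrbracket$ (and weighted by $q^{-1}$ for the mirror curl), since the opposite pairing fails to collapse to a single quantum integer. This is just a short planar check on the standard pictures of a curl and its two smoothings, but it has to be done. If one prefers to avoid it, one can instead expand $\left\llbracket\,\raisebox{-6pt}{\includegraphics[height=.22in]{vertnc}}\,\right\rrbracket$ for the kinked graph directly from its definition: only two of the four crossing--type orientations of the vertex extend over the loop, and each then gives a loop at a $4$--valent vertex, which the MOY loop--removal relation evaluates to $[n-1]$ times the arc, with rotation--number factors $q^{1-n}$ and $q^{n-1}$ for the two reversed orientations, so those two terms sum to $(q^{1-n}+q^{n-1})[n-1]=[2n-2]$ times the arc; the two alternating--vertex terms, unfolded through the definition of $\left[\,\raisebox{-6pt}{\includegraphics[height=.22in]{hyp1}}\,\right]$ into oriented smoothings, contribute the remaining $[2]$ times the arc.
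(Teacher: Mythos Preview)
Your main argument is correct and takes a genuinely different route from the paper. The paper proves the proposition by expanding $\left\llbracket\,\raisebox{-6pt}{\includegraphics[height=.22in]{kinked}}\,\right\rrbracket$ directly as the sum over its four globally consistent oriented states (two crossing-type, two alternating), evaluating each via the MOY loop-removal relation and the rotation-number prefactor, and then checking that the coefficient in front of \emph{each} oriented arc comes out to $[2n-2]+[2]$. Your route instead applies the skein relation~\eqref{eqn:skein-def} at the self-crossing of the curl, feeds in the already-established curl value $q^{2n-1}$ and the disjoint-circle value $[2n-1]+1$, and solves for the flat term. This is shorter and conceptually clean; the price is the pairing check you flag (which smoothing of the curl carries the weight $q$), whereas the paper's direct expansion never needs that check.

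One caution about your closing alternative sketch, which is the route the paper actually takes: the split ``crossing-type states contribute $[2n-2]$, alternating states contribute $[2]$'' is not correct as written. The two crossing-type states contribute $q^{1-n}[n-1]$ and $q^{n-1}[n-1]$, but to \emph{different} orientations of the outgoing arc, so they do not assemble into a single multiple of $\left\llbracket\,\raisebox{-3pt}{\includegraphics[width=.3in]{arc2}}\,\right\rrbracket$. What the paper verifies is that for \emph{each} fixed orientation of the arc, the one crossing-type state and the one alternating state landing on it together sum to $[2n-2]+[2]$; e.g.\ one orientation receives $q^{1-n}[n-1]$ from its crossing-type state and $q^{n-2}[n]+q$ from its alternating state, and only the sum collapses. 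So if you ever want to present that alternative in full, the bookkeeping is per-orientation rather than per-vertex-type. This does not affect your primary argument, which is sound.
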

\begin{proof}

\[
\left \llbracket \raisebox{-10pt}{\includegraphics[height=.35in]{kinked}}\, \right \rrbracket = \left [ \raisebox{-10pt}{\includegraphics[height=.35in]{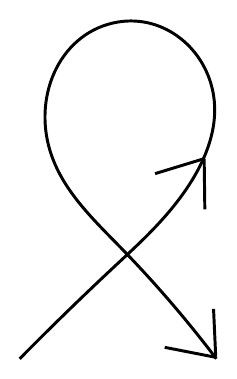}}\, \right ] + \left [ \raisebox{-10pt}{\includegraphics[height=.35in]{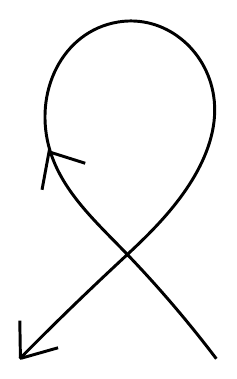}}\, \right ] + \left [ \raisebox{-10pt}{\includegraphics[height=.35in]{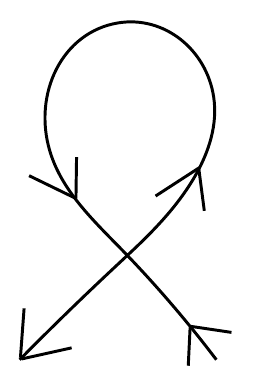}}\, \right ] + \left [ \raisebox{-10pt}{\includegraphics[height=.35in]{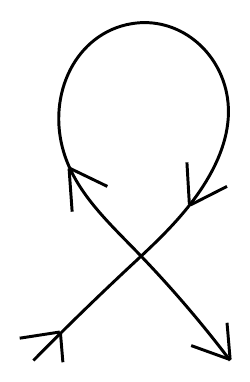}}\, \right ].
\]
Now, for the first oriented diagram, we have
\begin{align*}
\left [ \raisebox{-10pt}{\includegraphics[height=.35in]{kinkeda}}\, \right ] &= q^{(1-n)\textrm{rot}\left (\raisebox{-6pt}{\includegraphics[height=.25in]{kinkeda}}\right )} R\left (\raisebox{-10pt}{\includegraphics[height=.35in]{kinkeda}}\right ) 
= q^{(1-n)\textrm{rot}\left (\raisebox{-6pt}{\includegraphics[height=.25in]{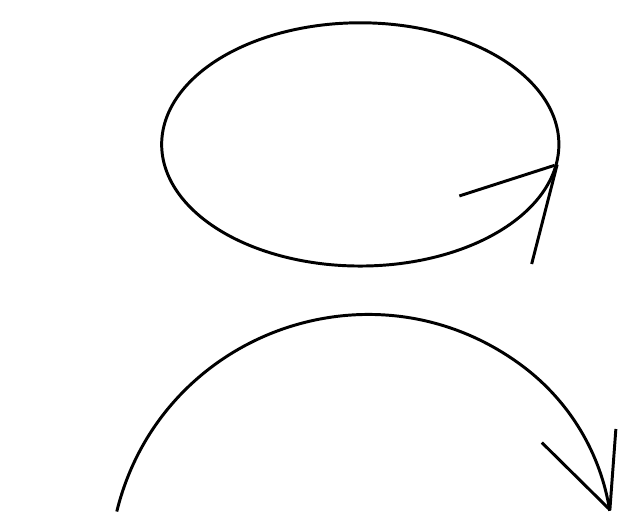}}\right )} [n-1] R\left (\raisebox{-3pt}{\includegraphics[width=.4in]{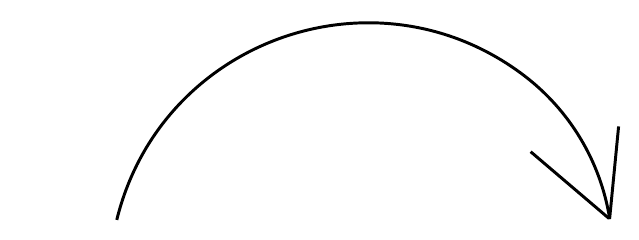}}\,\right )\\
&= q^{1-n}[n-1]q^{(1-n)\textrm{rot} \left (\raisebox{-1pt}{\includegraphics[width=.2in]{arc2a}}\, \right )} R \left (\raisebox{-3pt}{\includegraphics[width=.4in]{arc2a}}\, \right ) = q^{1-n}[n-1]\left [ \raisebox{-3pt}{\includegraphics[width=.4in]{arc2a}}\, \right ],
\end{align*}
and for the third oriented diagram, we have
\begin{align*}
\left [ \raisebox{-8pt}{\includegraphics[height=.35in]{kinkedc}} \right ]
	&= q \left [ \raisebox{-8pt}{\includegraphics[height=.3in]{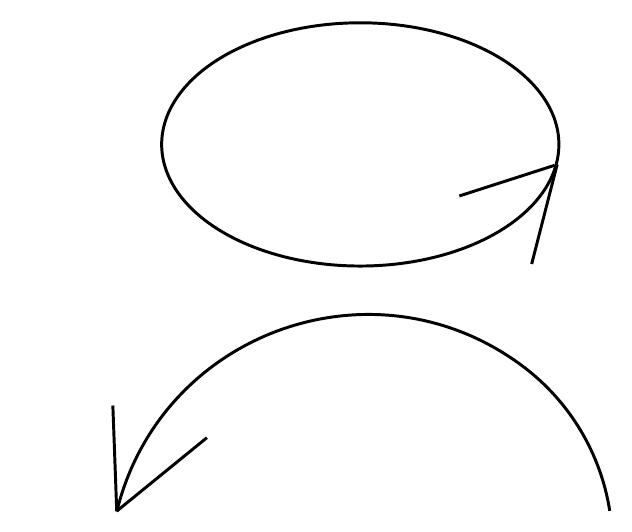}}\, \right ]
		+ q^{-1}\left [ \raisebox{-3pt}{\includegraphics[width=.4in]{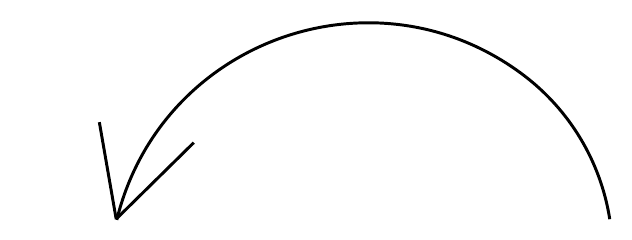}}\, \right ] \\
	&= q\cdot q^{(1-n)\textrm{rot}\left (\raisebox{-6pt}{\includegraphics[height=.25in]{arcirb}}\right )} R\left (\raisebox{-8pt}
			{\includegraphics[height=.3in]{arcirb}}\,\right )
		+ q^{-1}\left [ \raisebox{-3pt}{\includegraphics[width=.4in]{arc2b}}\, \right ] \\
	&= q\cdot q^{1-n} \cdot q^{(1-n)\textrm{rot}\left (\raisebox{-1pt}{\includegraphics[width=.2in]{arc2b}}\right )} [n] 
			R\left ( \raisebox{-3pt}{\includegraphics[width=.4in]{arc2b}} \, \right )
		+ q^{-1}\left [ \raisebox{-3pt}{\includegraphics[width=.4in]{arc2b}} \, \right ] \\
	&= q^{2-n} [n] \left [ \raisebox{-3pt}{\includegraphics[width=.4in]{arc2b}} \,\right ] 
		+ q^{-1} \left [ \raisebox{-3pt}{\includegraphics[width=.4in]{arc2b}}\, \right ]
		= (q^{2-n}[n]+q^{-1})\left [ \raisebox{-3pt}{\includegraphics[width=.4in]{arc2b}}\, \right ].
\end{align*}
Similarly, we obtain 
\[ \left [ \raisebox{-8pt}{\includegraphics[height=.35in]{kinkedb}}\, \right ] = q^{n-1}[n-1]\left [\raisebox{-3pt}{\includegraphics[width=.4in]{arc2b}}\, \right ] \,\, \text{and} \,\,
 \left [ \raisebox{-8pt}{\includegraphics[height=.35in]{kinkedd}} \right ] = (q^{n-2}[n]+q)\left [\raisebox{-3pt}{\includegraphics[width=.4in]{arc2a}} \,\right ] . \]  
Using these evaluations for each of the oriented states, we arrive at 
\[ \left \llbracket \raisebox{-8pt}{\includegraphics[height=.3in]{kinked}}\, \right \rrbracket = ([2n-2]+[2]) \left( \left [\raisebox{-3pt}{\includegraphics[width=.4in]{arc2b}}\, \right ] + \left [\raisebox{-3pt}{\includegraphics[width=.4in]{arc2a}}\, \right ] \right) = ([2n-2]+[2])\left \llbracket \raisebox{-2pt}{\includegraphics[width=.3in]{arc2}}\, \right \rrbracket . \]
\end{proof}


\begin{proposition} \label{R2} The following skein relation holds:
\[\left \llbracket\, \raisebox{-8pt}{\includegraphics[height=.3in]{R2}} \,\right \rrbracket = ([2n-3]+1)\left \llbracket\, \raisebox{-8pt}{\includegraphics[height=.3in]{split2}}\, \right \rrbracket + [2]\left \llbracket\, \raisebox{-8pt}{\includegraphics[height=.3in]{vertnc}}\, \right \rrbracket .\]
\end{proposition}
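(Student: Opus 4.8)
The plan is to follow the template used for Proposition~\ref{R1}. Write $D$ for the two-vertex ``bigon'' graph on the left-hand side of the statement. First I would expand $\db{\,D\,}$ into a sum of oriented bracket evaluations $[\Gamma]$: each of the two unoriented $4$-valent vertices of $D$ is replaced according to the definition of $\db{\,\mathrm{vertnc}\,}$, namely by its four crossing-type oriented resolutions $\mathrm{verta},\dots,\mathrm{vertd}$ together with the two alternating oriented resolutions $\mathrm{hyp1},\mathrm{hyp2}$, retaining only the combinations that fit together into a globally compatible orientation of $D$. This leaves a short explicit list of oriented $4$-valent planar graphs $\Gamma$.

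Next, for each such $\Gamma$ I would compute $[\Gamma] = (q^{1-n})^{\textrm{rot}(\Gamma)} R(\Gamma)$. For the states containing an alternating vertex I would first use the defining relation $[\mathrm{hyp1}] = q\,[\mathrm{split1or}] + q^{-1}\,[\mathrm{split2or}]$ and its mirror for $\mathrm{hyp2}$ to rewrite them in terms of crossing-type oriented brackets. I would then evaluate $R(\Gamma)$ by removing the bigon with the digon-removal relations of Figure~\ref{fig:web skein relations}, which introduce the factors $[2]$, $[n-2]$, $[n-1]$ and $[n]$; exactly as in Proposition~\ref{R1}, the rotation numbers must be tracked carefully, using that $(q^{1-n})^{\textrm{rot}(\Gamma)} = (q^{1-n})^{\textrm{rot}(\Gamma')}$ whenever $\Gamma'$ is obtained from $\Gamma$ by splicing the bigon along the edge orientations, so that the surviving $(q^{1-n})$-powers can be pulled out of each term.

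Having evaluated all the states, I would sort the resulting terms into two families: the four oriented resolutions whose sum is $\db{\,\mathrm{split2}\,}$ (by Jaeger's theorem, as recorded just before Proposition~\ref{R1}), and the six oriented resolutions $\mathrm{verta},\dots,\mathrm{vertd},\mathrm{hyp1},\mathrm{hyp2}$ whose sum is $\db{\,\mathrm{vertnc}\,}$ by definition. What remains is a purely algebraic simplification: using $[m-1] = \dfrac{q^{m-1} - q^{1-m}}{q-q^{-1}}$ and the standard quantum-integer identity $q^{a}[m] + q^{-a}[m] = [m+a] + [m-a]$ (the same kind of manipulation by which $q^{1-n}[n-1] + q^{n-2}[n] + q$ collapses to $[2n-2] + [2]$ in Proposition~\ref{R1}), the coefficient of $\db{\,\mathrm{split2}\,}$ reduces to $[2n-3]+1$ and the coefficient of $\db{\,\mathrm{vertnc}\,}$ reduces to $[2]$.

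The main obstacle is the middle step: enumerating the globally compatible orientations of $D$ correctly — in particular recognizing which of them force an alternating vertex — and then tracking the rotation-number contributions so that the evaluated states reassemble exactly into the two defining sums for $\db{\,\mathrm{split2}\,}$ and $\db{\,\mathrm{vertnc}\,}$ rather than into some other linear combination of oriented graphs. Once the terms are correctly aligned, the final identification with $([2n-3]+1)\db{\,\mathrm{split2}\,} + [2]\db{\,\mathrm{vertnc}\,}$ is routine.
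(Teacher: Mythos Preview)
Your proposal is correct and follows essentially the same route as the paper's own proof: the paper also expands the bigon into the list of globally compatible oriented states (ten of them, including those with alternating vertices), evaluates each via the MOY digon/loop relations while tracking rotation numbers, and then regroups the results into the defining sums for $\db{\,\mathrm{split2}\,}$ and $\db{\,\mathrm{vertnc}\,}$ with coefficients $[2n-3]+1$ and $[2]$.
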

\begin{proof} We know that
\begin{align*}
\left \llbracket \raisebox{-8pt}{\includegraphics[height=.3in]{R2}} \right \rrbracket 
	&=  \left [\raisebox{-8pt}{\includegraphics[height=.3in]{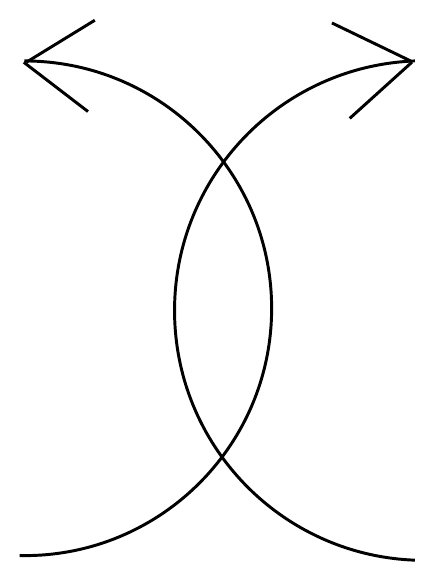}} \right ]
		+\left [\raisebox{-8pt}{\includegraphics[height=.3in]{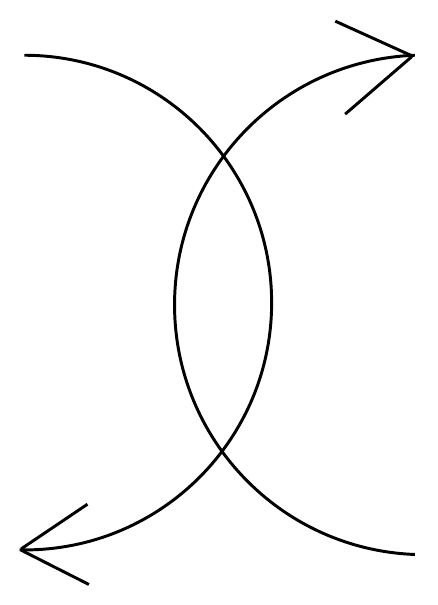}} \right ]
		+\left [\raisebox{-8pt}{\includegraphics[height=.3in]{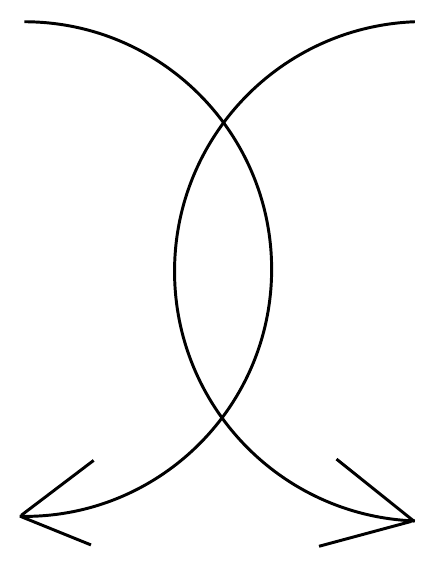}} \right ]
		+\left [\raisebox{-8pt}{\includegraphics[height=.3in]{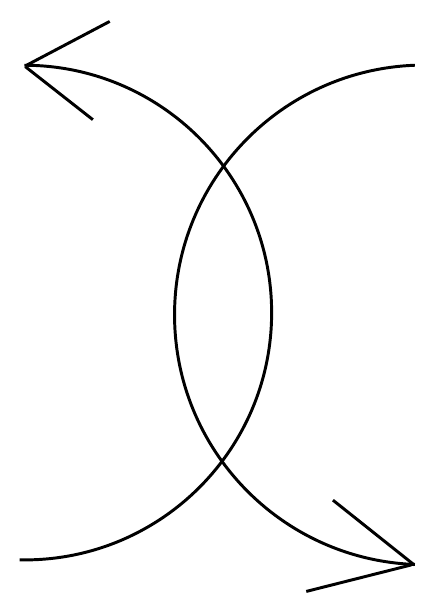}} \right ]
		+\left [\raisebox{-8pt}{\includegraphics[height=.3in]{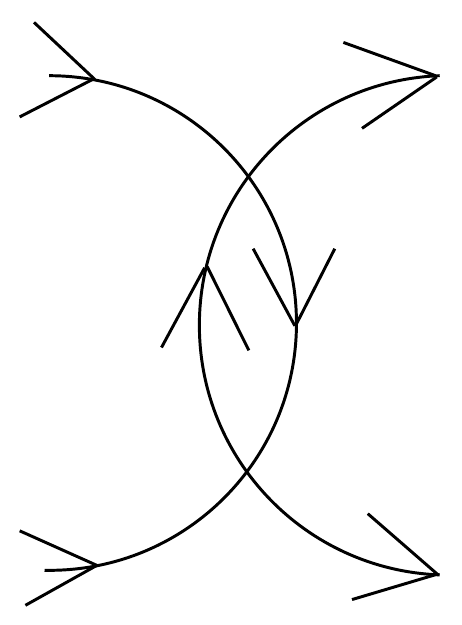}} \right ]
		+\left [\raisebox{-8pt}{\includegraphics[height=.3in]{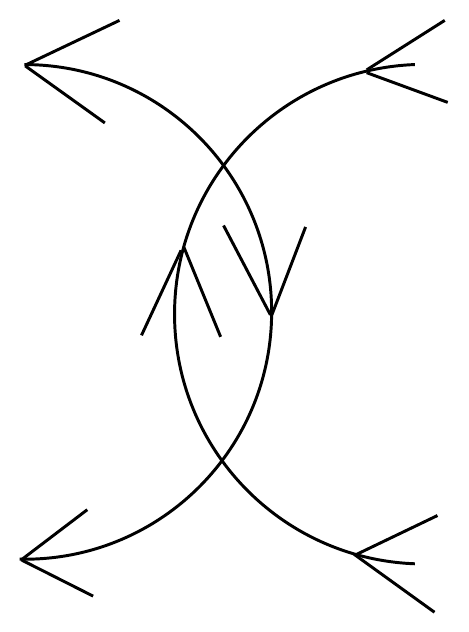}} \right ]
		+\left [\raisebox{-8pt}{\includegraphics[height=.3in]{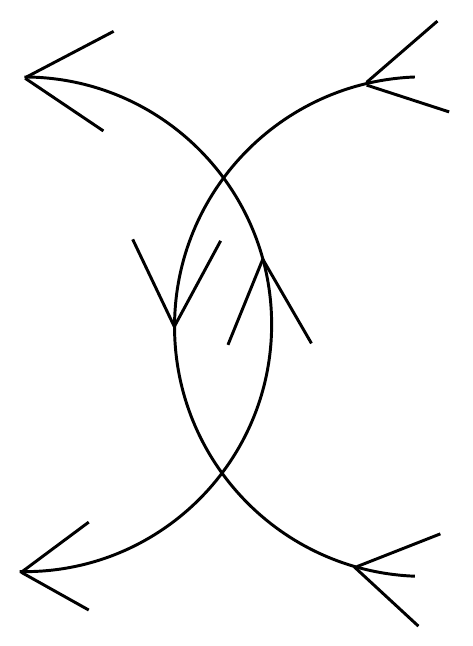}} \right ]
		+\left [\raisebox{-8pt}{\includegraphics[height=.3in]{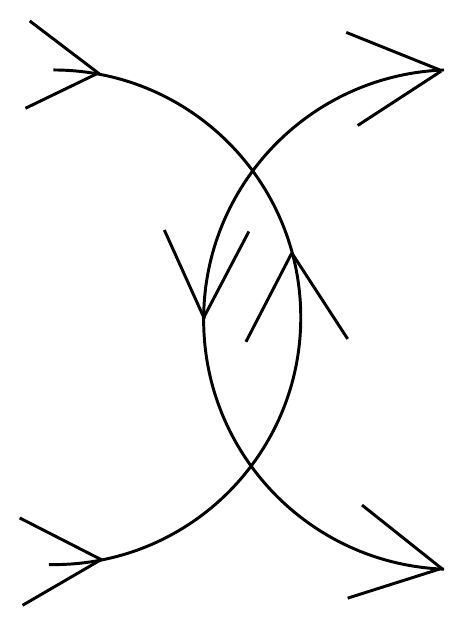}} \right ] \\
		&+\left [\raisebox{-8pt}{\includegraphics[height=.3in]{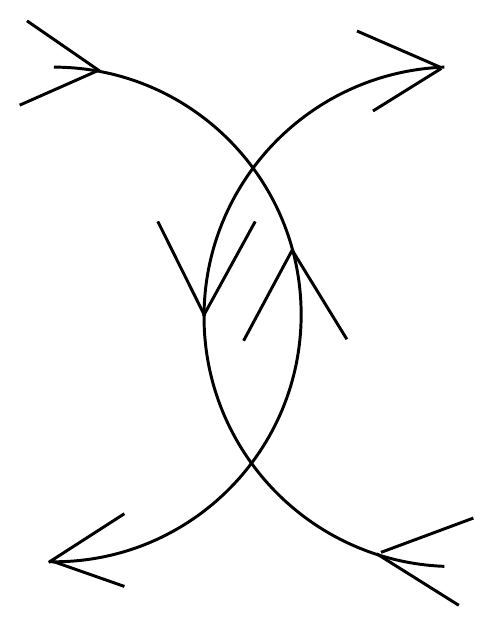}} \right ]  +\left [\raisebox{-8pt}{\includegraphics[height=.3in]{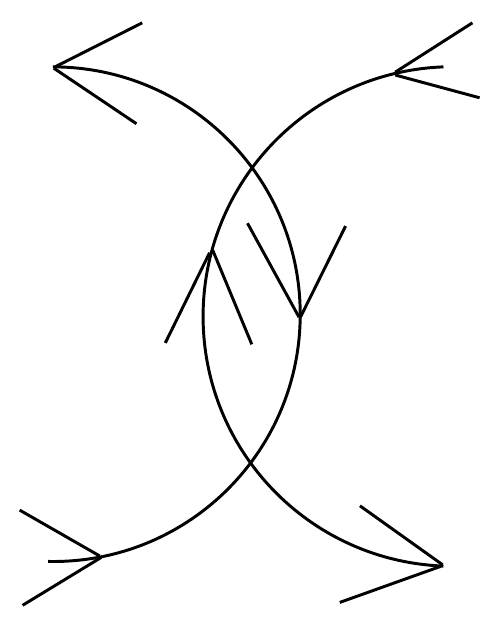}} \right ].
\end{align*}
Now, $\left [\raisebox{-8pt}{\includegraphics[height=.3in]{R2a}} \right ]
	=  (q^{1-n})^{\textrm{rot} \left (\raisebox{-5pt}{\includegraphics[height=.2in]{R2a}}\right )}
			R \left (\raisebox{-8pt}{\includegraphics[height=.3in]{R2a}}\right )
	=  (q^{1-n})^{\textrm{rot} \left (\raisebox{-5pt}{\includegraphics[height=.2in]{splitd}}\right )}
			[2] R \left (\raisebox{-8pt}{\includegraphics[height=.3in]{vertd}}\right )
	= [2]\left [\raisebox{-8pt}{\includegraphics[height=.3in]{vertd}} \right ] $, \\
and 	
\begin{align*}
\left [\raisebox{-8pt}{\includegraphics[height=.3in]{R2b}} \right ] &= (q^{1-n})^{\textrm{rot} \left (\raisebox{-5pt}{\includegraphics[height=.2in]{R2b}}\right )}
			R \left (\raisebox{-8pt}{\includegraphics[height=.3in]{R2b}}\right )\\
			& = (q^{1-n})^{\textrm{rot} \left (\raisebox{-5pt}{\includegraphics[height=.2in]{split1ro}}\right )} \left( R \left (\raisebox{-8pt}{\includegraphics[height=.3in]{split1ro}} \right) + [n-2] R \left (\raisebox{-8pt}{\includegraphics[height=.3in]{split2ro}} \right)
			 \right )\\
			 & =  \left [\raisebox{-8pt}{\includegraphics[height=.3in]{split1ro}} \right ] 
		+q^{n-1}[n-2]\left [\raisebox{-8pt}{\includegraphics[height=.3in]{split2ro}} \right ], 
\end{align*}
where we used the fact that $\textrm{rot} \left (\raisebox{-5pt}{\includegraphics[height=.2in]{split1ro}}\right ) = \textrm{rot} \left (\raisebox{-5pt}{\includegraphics[height=.2in]{split2ro}}\right )-1$.
We also have that
\begin{align*}
\left [\raisebox{-8pt}{\includegraphics[height=.3in]{R2c}} \right ]
	&= q\left [\raisebox{-8pt}{\includegraphics[height=.3in]{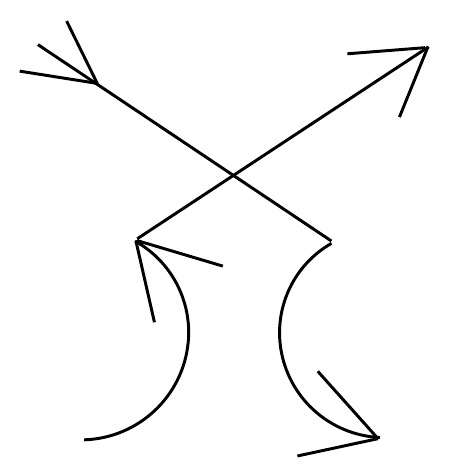}} \right ]
		+q^{-1} \left [\raisebox{-8pt}{\includegraphics[height=.3in]{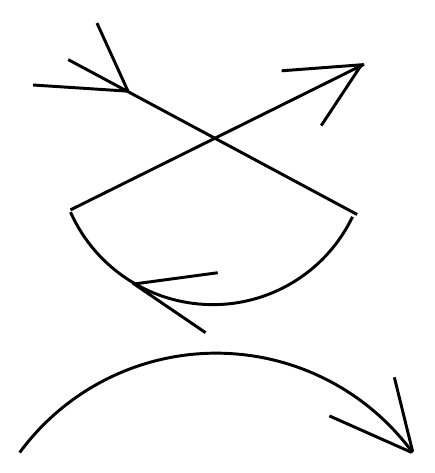}} \right ]
	= q\left [\raisebox{-8pt}{\includegraphics[height=.3in]{RtwoA}} \right ] +q^{-1}\dot (q^{1-n})^{\textrm{rot}  \left (\raisebox{-5pt}{\includegraphics[height=.2in]{splita}} - 1\right ) } R \left (\raisebox{-8pt}{\includegraphics[height=.3in]{RtwoB}} \right )\\
	&=  q\left [\raisebox{-8pt}{\includegraphics[height=.3in]{RtwoA}} \right ] + q^{-1}\dot q^{n-1} (q^{1-n})^{\textrm{rot}  \left (\raisebox{-5pt}{\includegraphics[height=.2in]{splita}}\right)} [n-1] R \left (\raisebox{-8pt}{\includegraphics[height=.3in]{splita}}  \right)
		\\
		& = q\left [\raisebox{-8pt}{\includegraphics[height=.3in]{verta}} \right ]
		+q^{n-2}[n-1]\left [\raisebox{-8pt}{\includegraphics[height=.3in]{splita}} \right ].
\end{align*}
Similarly, for the bigon with alternating oriented vertices, we have
\begin{align*}
\left [\raisebox{-8pt}{\includegraphics[height=.3in]{R2h}} \right ]
	&= q^{-1}\left [\raisebox{-8pt}{\includegraphics[height=.3in]{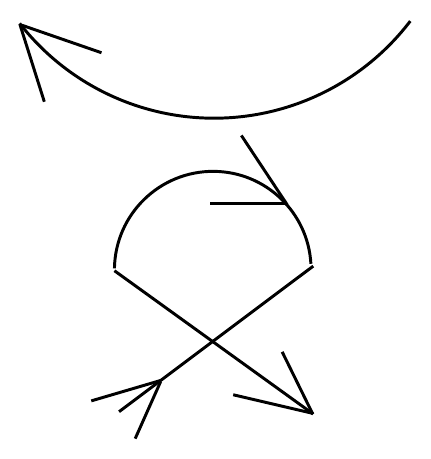}} \right ]
		+q\left [\raisebox{-8pt}{\includegraphics[height=.3in]{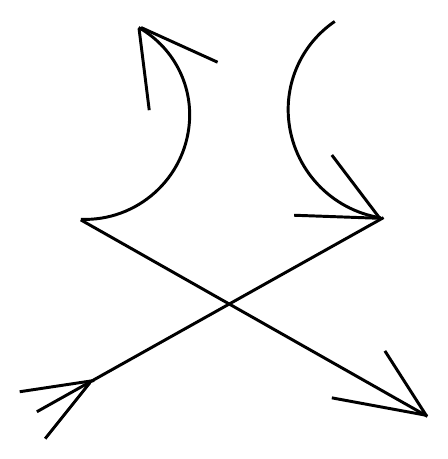}} \right ] \\
	&= q^{-1}\left ( q\left [\raisebox{-8pt}{\includegraphics[height=.3in]{split2or}} \right ] 
		+ q^{-1}\left [\raisebox{-8pt}{\includegraphics[height=.3in]{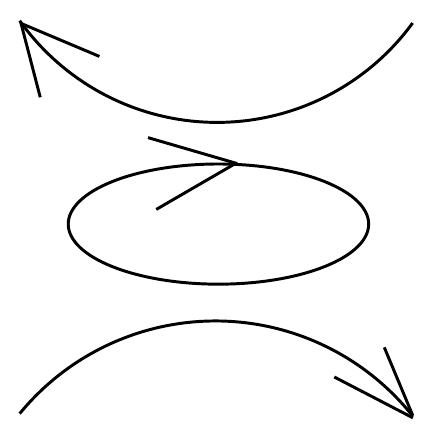}} \right ]\right )
		+q\left ( q\left [\raisebox{-8pt}{\includegraphics[height=.3in]{split1or}} \right ]
		+q^{-1}\left [\raisebox{-8pt}{\includegraphics[height=.3in]{split2or}} \right ] \right ) \\
	&=\left [\raisebox{-8pt}{\includegraphics[height=.3in]{split2or}} \right ]
		+q^{-2}\cdot q^{n-1}
			[n]\left [\raisebox{-8pt}{\includegraphics[height=.3in]{split2or}}\right ]
		+q^2\left [\raisebox{-8pt}{\includegraphics[height=.3in]{split1or}} \right ]
		+\left [\raisebox{-8pt}{\includegraphics[height=.3in]{split2or}} \right ] \\
		&=q^2\left [\raisebox{-8pt}{\includegraphics[height=.3in]{split1or}} \right ]
		+(q^{n-3}[n]+2)\left [\raisebox{-8pt}{\includegraphics[height=.3in]{split2or}} \right ].
\end{align*}
The remaining diagrams can be evaluated similarly. Thus, we have
\begin{eqnarray*}
\left \llbracket \raisebox{-8pt}{\includegraphics[height=.3in]{R2}} \right \rrbracket	
&=& \left [\raisebox{-8pt}{\includegraphics[height=.3in]{R2a}} \right ]
		+\left [\raisebox{-8pt}{\includegraphics[height=.3in]{R2b}} \right ]
		+\left [\raisebox{-8pt}{\includegraphics[height=.3in]{R2d}} \right ]
		+\left [\raisebox{-8pt}{\includegraphics[height=.3in]{R2e}} \right ]
		+\left [\raisebox{-8pt}{\includegraphics[height=.3in]{R2c}} \right ]
		+\left [\raisebox{-8pt}{\includegraphics[height=.3in]{R2g}} \right ]
		+\left [\raisebox{-8pt}{\includegraphics[height=.3in]{R2f}} \right ]
		+\left [\raisebox{-8pt}{\includegraphics[height=.3in]{R2i}} \right ] \\
		&&+\left [\raisebox{-8pt}{\includegraphics[height=.3in]{R2j}} \right ]  +\left [\raisebox{-8pt}{\includegraphics[height=.3in]{R2h}} \right ],
		 \end{eqnarray*}
and using the above computations yields
 \begin{eqnarray*}
 \left \llbracket \raisebox{-8pt}{\includegraphics[height=.3in]{R2}} \right \rrbracket
	&=& [2]\left [\raisebox{-8pt}{\includegraphics[height=.3in]{vertd}} \right ]
		+ \left ( \left [\raisebox{-8pt}{\includegraphics[height=.3in]{split1ro}} \right ] 
		+q^{n-1}[n-2]\left [\raisebox{-8pt}{\includegraphics[height=.3in]{split2ro}} \right ] \right )\\
		&& + [2]\left [\raisebox{-8pt}{\includegraphics[height=.3in]{vertb}} \right ] + \left ( \left [\raisebox{-8pt}{\includegraphics[height=.3in]{split1or}} \right ]
		+q^{1-n}[n-2]\left [\raisebox{-8pt}{\includegraphics[height=.3in]{split2or}} \right ] \right ) \\
		&&+\left ( q\left [\raisebox{-8pt}{\includegraphics[height=.3in]{verta}} \right ]
		+ q^{n-2}[n-1]\left [\raisebox{-8pt}{\includegraphics[height=.3in]{splita}} \right ] \right ) +\left ( q\left [\raisebox{-8pt}{\includegraphics[height=.3in]{vertc}} \right ]
		+q^{n-2}[n-1]\left [\raisebox{-8pt}{\includegraphics[height=.3in]{splitc}} \right ] \right )  \\
		&& + \left (q^{-1}\left [\raisebox{-8pt}{\includegraphics[height=.3in]{vertc}} \right ] + q^{2-n}[n-1]\left [\raisebox{-8pt}{\includegraphics[height=.3in]{splitc}} \right ] \right ) + \left (q^{-1}\left [\raisebox{-8pt}{\includegraphics[height=.3in]{verta}} \right ] + q^{2-n}[n-1]\left [\raisebox{-8pt}{\includegraphics[height=.3in]{splita}} \right ]\right ) \\
		&&+\left ( q^{-2}\left [\raisebox{-8pt}{\includegraphics[height=.3in]{split1ro}} \right ] + (q^{3-n}[n]+2)\left [\raisebox{-8pt}{\includegraphics[height=.3in]{split2ro}} \right ] \right ) + \left ( q^2\left [\raisebox{-8pt}{\includegraphics[height=.3in]{split1or}} \right ]
		+(q^{n-3}[n]+2)\left [\raisebox{-8pt}{\includegraphics[height=.3in]{split2or}} \right ] \right ).
		 \end{eqnarray*}
Combining like terms, we have
\begin{eqnarray*}
	\left \llbracket \raisebox{-8pt}{\includegraphics[height=.3in]{R2}} \right \rrbracket
	&=&(q+q^{-1})\left [\raisebox{-8pt}{\includegraphics[height=.3in]{hyp1}} \right ] 
		+(q+q^{-1})\left [\raisebox{-8pt}{\includegraphics[height=.3in]{hyp2}} \right ] 
		+[2]\left [\raisebox{-8pt}{\includegraphics[height=.3in]{vertd}} \right ] 
		+[2]\left [\raisebox{-8pt}{\includegraphics[height=.3in]{vertb}} \right ] 
		+(q_q^{-1})\left [\raisebox{-8pt}{\includegraphics[height=.3in]{verta}} \right ] \\
		&&+(q+q^{-1})\left [\raisebox{-8pt}{\includegraphics[height=.3in]{vertc}} \right ] 
		+([2n-3]+1)\left [\raisebox{-8pt}{\includegraphics[height=.3in]{split2ro}} \right ] 
		+([2n-3]+1)\left [\raisebox{-8pt}{\includegraphics[height=.3in]{splita}} \right ] \\
		&&+([2n-3]+1)\left [\raisebox{-8pt}{\includegraphics[height=.3in]{split2or}} \right ] 
		+([2n-3]+1)\left [\raisebox{-8pt}{\includegraphics[height=.3in]{splitc}} \right ] \\
	&=& [2]\left \llbracket \raisebox{-8pt}{\includegraphics[height=.3in]{vertnc} }\right \rrbracket + ([2n-3]+1)\left \llbracket \raisebox{-8pt}{\includegraphics[height=.3in]{split2} }\right \rrbracket ,
\end{eqnarray*}
which completes the proof.
\end{proof}

\begin{proposition} \label{R3}
The following graph skein relation holds:
\begin{eqnarray*}
&&\left \llbracket \raisebox{-8pt}{\includegraphics[height=.3in]{RIII1a}} \right \rrbracket
	+ \left \llbracket \raisebox{-8pt}{\includegraphics[height=.3in]{RIII1b}} \right \rrbracket
	- \left \llbracket \raisebox{-8pt}{\includegraphics[height=.3in]{RIII1c}} \right \rrbracket
	- \left \llbracket \raisebox{-8pt}{\includegraphics[height=.3in]{RIII1d}} \right \rrbracket
	- [2n-4] \left \llbracket \raisebox{-8pt}{\includegraphics[height=.3in]{RIII1e}} \right \rrbracket = \\
	&& \left \llbracket \raisebox{-8pt}{\includegraphics[height=.3in]{RIII2a}} \right \rrbracket
	+ \left \llbracket \raisebox{-8pt}{\includegraphics[height=.3in]{RIII2b}} \right \rrbracket
	- \left \llbracket \raisebox{-8pt}{\includegraphics[height=.3in]{RIII2c}} \right \rrbracket
	- \left \llbracket \raisebox{-8pt}{\includegraphics[height=.3in]{RIII2d}} \right \rrbracket
	- [2n-4] \left \llbracket \raisebox{-8pt}{\includegraphics[height=.3in]{RIII2e}} \right \rrbracket .
	\end{eqnarray*}
\end{proposition}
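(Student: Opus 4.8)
The plan is to follow the pattern of the proofs of Propositions~\ref{R1} and~\ref{R2}: expand each of the ten unoriented $4$-valent graph brackets in the statement into a sum of oriented graph brackets $[\Gamma]$, using the definition of $\llbracket\,\cdot\,\rrbracket$ on the two smoothings and on the unoriented vertex together with the defining relation of the alternating-oriented vertex, and then verify the resulting identity of sums of oriented brackets term by term.

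First I would list the orientations of the three external legs of the Reidemeister~III region that extend to a globally compatible orientation; there are only finitely many such boundary patterns. Every oriented state of every one of the ten graphs restricts to one of these patterns, so the claimed identity decomposes as a finite sum, over boundary patterns, of identities between sums of oriented graph brackets all sharing that boundary behaviour. Carrying out this sorting is the main organizational step, and it is what renders the left-hand and right-hand expansions directly comparable.

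Next, for each fixed boundary pattern I would compare the oriented graphs that occur on the two sides. For most patterns the oriented states on the left agree, up to planar isotopy, with those on the right, and since the associated Seifert-circle pictures are isotopic their rotation numbers coincide, so the normalized contributions $(q^{1-n})^{\textrm{rot}(\Gamma)}R(\Gamma)$ cancel pairwise. The patterns carrying genuine content are those in which the triple point is oriented as a ``square''; for these the oriented graph on the left is carried to the one on the right by one of the last two web skein relations of Figure~\ref{fig:web skein relations} --- the coefficient-free relation handles the patterns in which all three strands run coherently through the region, while the relation carrying the weight $[n-3]$ handles the mixed patterns and there produces a correction term. The bulk of the proof consists in checking that, once each oriented graph is weighted by its normalization factor $(q^{1-n})^{\textrm{rot}(\Gamma)}$, these correction terms reassemble --- via the elementary identity $(q^{1-n}+q^{n-1})[m]=[m+n-1]+[m+1-n]$ already underlying Propositions~\ref{R0}--\ref{R2}, with the stray $[m+1-n]$ pieces absorbed by digon-removal and loop-removal applied to any bigons or closed loops created when a strand is pushed across the triple point --- into precisely the $[2n-4]$-weighted unoriented brackets that appear on the two sides of the asserted equality.

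The step I expect to be the main obstacle is exactly this bookkeeping: each graph in the statement has several $4$-valent vertices, and since we allow both crossing-type and alternating orientations the number of oriented states is large, so one must be systematic about rotation numbers and about which web relation (square, digon-removal, or loop-removal) to invoke for each configuration --- in particular, the relations of Figure~\ref{fig:web skein relations} are stated only for crossing-type vertices, so every alternating-oriented vertex that arises must first be re-expanded through its defining relation before those relations apply. A less computational alternative is to use~\eqref{eqn:skein-def} in its inverted form, expressing the unoriented vertex as $q$ times one smoothing plus $q^{-1}$ times the other minus the crossing; this turns both sides of the claimed identity into $\Z[q,q^{-1}]$-linear combinations of brackets of ordinary link diagrams, after which the identity follows from the Reidemeister~III invariance of $\llbracket\,\cdot\,\rrbracket$ proved in Section~\ref{sec:Jaeger}, together with Propositions~\ref{R0}, \ref{R1}, and~\ref{R2} used to clear away the Reidemeister~I and~II debris.
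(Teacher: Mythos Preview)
Your proposal is correct, and in fact both of your suggested routes appear in the paper --- but with the emphasis reversed. The paper explicitly remarks that the direct approach you describe first (expanding each of the ten brackets into oriented states and matching via the MOY web relations) would work, but dismisses it as ``cumbersome computations'' and instead adopts what you call the ``less computational alternative'': it starts from the Reidemeister~III invariance of $\llbracket\,\cdot\,\rrbracket$ (established in Section~\ref{sec:Jaeger}), expands the three crossings on each side via the skein relation~\eqref{eqn:skein-def}, cancels the matching pieces using Reidemeister~II invariance, and then applies Propositions~\ref{R1} and~\ref{R2} to reduce the remaining bigon and curl diagrams until only the ten graph brackets in the statement survive with the correct coefficients. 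The only cosmetic difference from your alternative is direction: the paper expands crossings \emph{forward} into vertex-plus-smoothings rather than inverting~\eqref{eqn:skein-def} to replace vertices by crossings; algebraically these are the same manipulation.

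What each approach buys: your primary route is self-contained at the level of oriented graphs and would give an independent check of the MOY square relations' role, but is long; the paper's route (your alternative) is short and makes transparent that the graph relation in Proposition~\ref{R3} is precisely the shadow of Reidemeister~III under the skein relation, at the cost of relying on the link-level invariance already proved.
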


\begin{proof} To prove the statement, one can use the same approach as in the previous propositions, namely evaluating $\left \llbracket \raisebox{-8pt}{\includegraphics[height=.3in]{RIII1a}} \right \rrbracket$  and $\left \llbracket \raisebox{-8pt}{\includegraphics[height=.3in]{RIII2a}} \right \rrbracket$ by summing over all bracket evaluations for all the associated oriented diagrams. To avoid cumbersome computations,  we use instead the fact that $\db{\,\,\cdot\, \,}$ is invariant under the Reidemester III move. That is, $\left \llbracket \raisebox{-8pt}{\includegraphics[height=.3in]{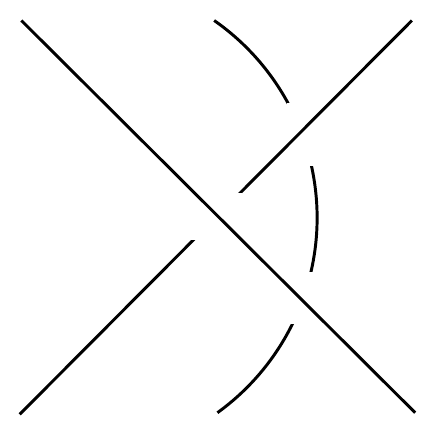}} \right \rrbracket = \left \llbracket \raisebox{-8pt}{\includegraphics[height=.3in]{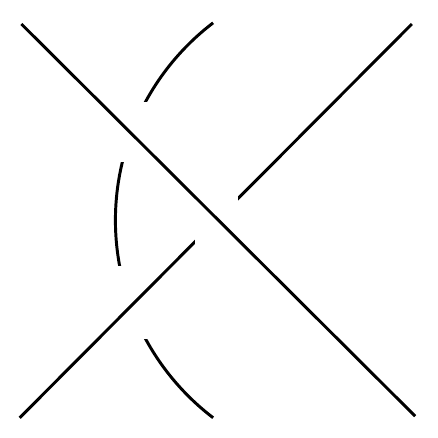}} \right \rrbracket $.
Using the skein relation~\eqref{eqn:skein-def}, we have 
\begin{eqnarray*}
\left \llbracket \raisebox{-8pt}{\includegraphics[height=.3in]{Rth1}} \right \rrbracket
	&=& q\left \llbracket \raisebox{-8pt}{\includegraphics[height=.3in]{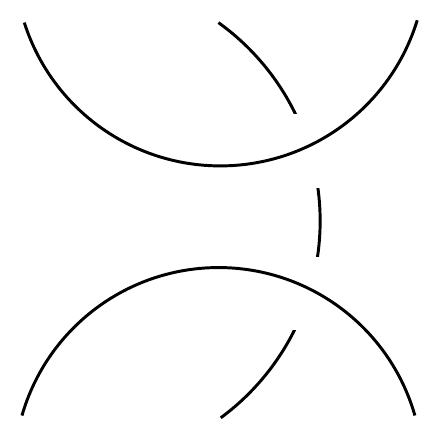}} \right \rrbracket
		+ q^{-1} \left \llbracket \raisebox{-8pt}{\includegraphics[height=.3in]{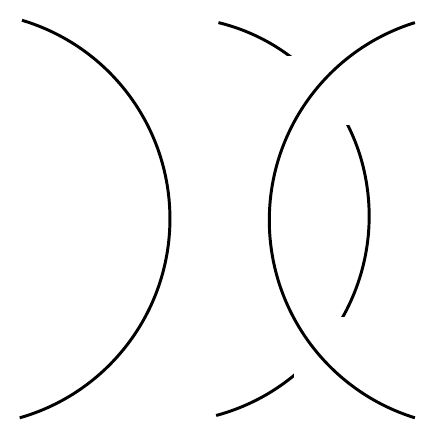}} \right \rrbracket
		- \left \llbracket \raisebox{-8pt}{\includegraphics[height=.3in]{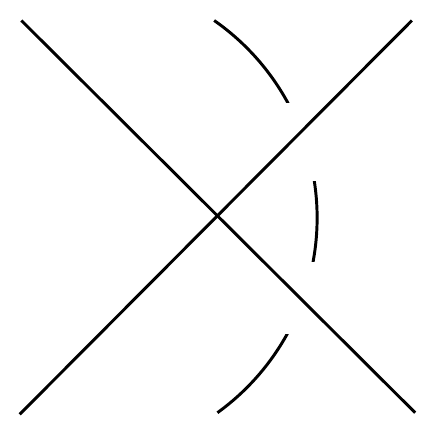}} \right \rrbracket \\
\left \llbracket \raisebox{-8pt}{\includegraphics[height=.3in]{Rth2}} \right \rrbracket
	&=& q\left \llbracket \raisebox{13pt}{\includegraphics[height=.3in, angle = 180]{Rth1a}} \right \rrbracket
		+ q^{-1} \left \llbracket \raisebox{-8pt}{\includegraphics[height=.3in]{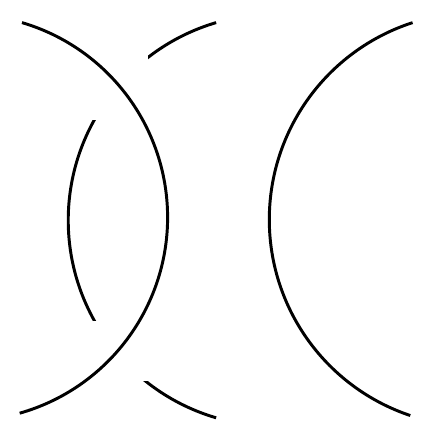}} \right \rrbracket
		- \left \llbracket \raisebox{-8pt}{\includegraphics[height=.3in]{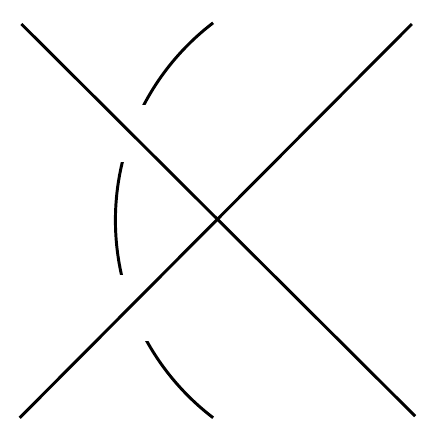}} \right \rrbracket .
\end{eqnarray*}
Since $\db{\,\,\cdot \,\,}$ is invariant under the Reidemeister II move, $\left \llbracket \raisebox{-8pt}{\includegraphics[height=.3in]{Rth1b}} \right \rrbracket = \left \llbracket \raisebox{-8pt}{\includegraphics[height=.3in]{Rth2b}} \right \rrbracket$, and we obtain that
$\left \llbracket \raisebox{-8pt}{\includegraphics[height=.3in]{Rth2c}} \right \rrbracket 
= \left \llbracket \raisebox{-8pt}{\includegraphics[height=.3in]{Rth1c}} \right \rrbracket$. Using again the skein relation~\eqref{eqn:skein-def}, we have 
\begin{eqnarray*}
0 &=& \left \llbracket \raisebox{-8pt}{\includegraphics[height=.3in]{Rth2c}} \right \rrbracket
		- \left \llbracket \raisebox{-8pt}{\includegraphics[height=.3in]{Rth1c}} \right \rrbracket  \\
	&=& q\left \llbracket \raisebox{-8pt}{\includegraphics[height=.3in]{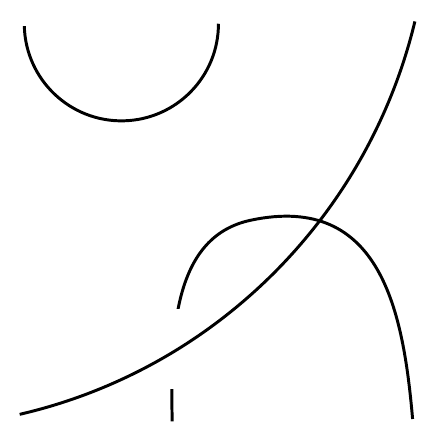}} \right \rrbracket
		+q^{-1}\left \llbracket \raisebox{-8pt}{\includegraphics[height=.3in]{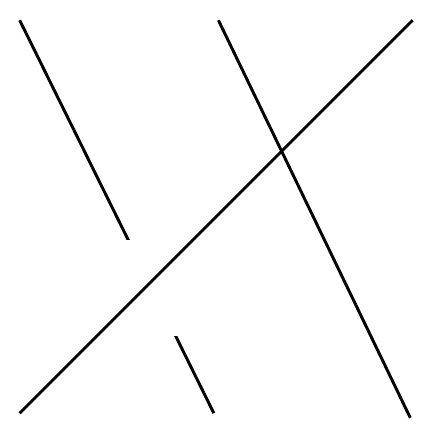}} \right \rrbracket
		-\left \llbracket \raisebox{-8pt}{\includegraphics[height=.3in]{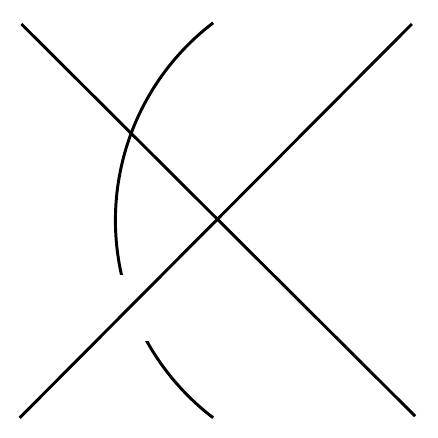}} \right \rrbracket 
		- \left (q\left \llbracket \raisebox{-8pt}{\includegraphics[height=.3in]{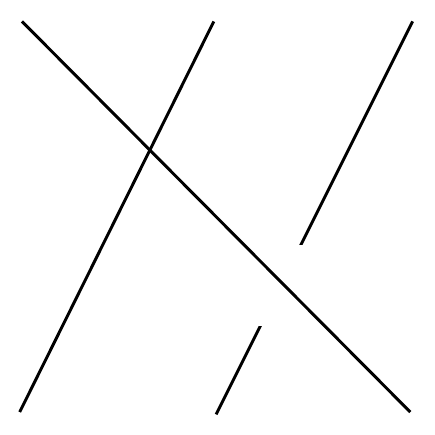}} \right \rrbracket
		+ q^{-1} \left \llbracket \raisebox{-8pt}{\includegraphics[height=.3in]{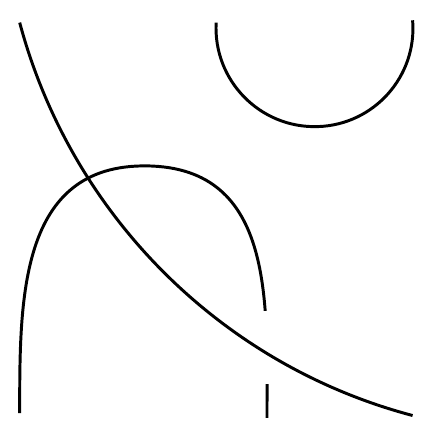}} \right \rrbracket
		- \left \llbracket \raisebox{-8pt}{\includegraphics[height=.3in]{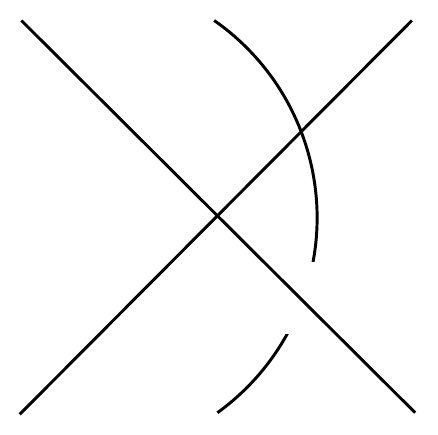}} \right \rrbracket \right ) \\
	&=& q \left ( q \left \llbracket \raisebox{-8pt}{\includegraphics[height=.3in]{RIII2c}} \right \rrbracket
		+ q^{-1} \left \llbracket \raisebox{-8pt}{\includegraphics[height=.3in]{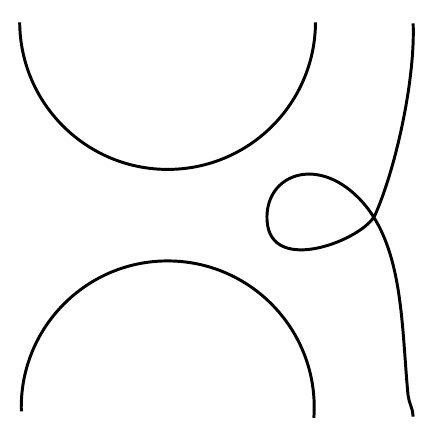}} \right \rrbracket
		- \left \llbracket \raisebox{-8pt}{\includegraphics[height=.3in]{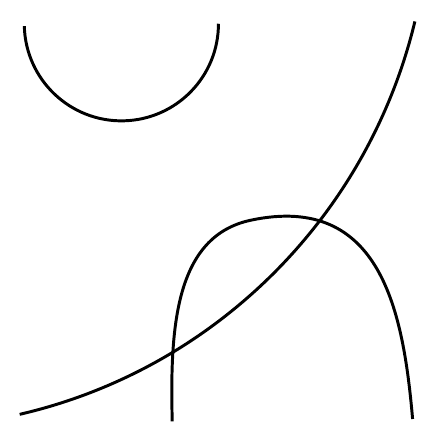}} \right \rrbracket \right ) 
		+ q^{-1} \left ( q \left \llbracket \raisebox{-8pt}{\includegraphics[height=.3in]{RIII2b}} \right \rrbracket
		+ q^{-1} \left \llbracket \raisebox{-8pt}{\includegraphics[height=.3in]{RIII2d}} \right \rrbracket
		- \left \llbracket \raisebox{-8pt}{\includegraphics[height=.3in]{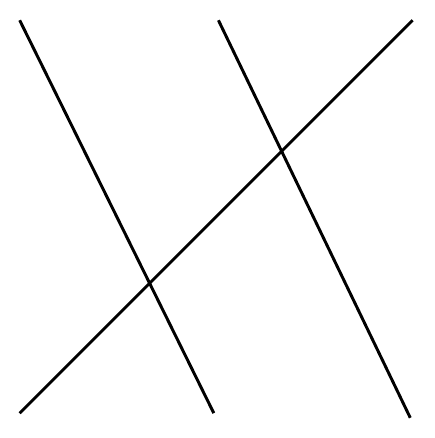}} \right \rrbracket \right ) \\
		&& - \left ( q\left \llbracket \raisebox{-8pt}{\includegraphics[height=.3in]{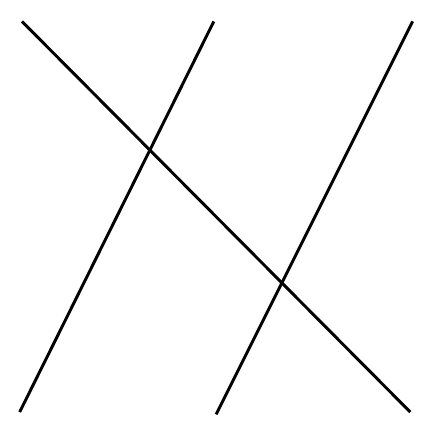}} \right \rrbracket
		+ q^{-1} \left \llbracket \raisebox{-8pt}{\includegraphics[height=.3in]{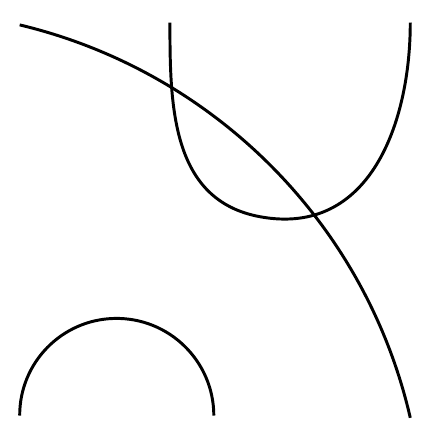}} \right \rrbracket
		- \left \llbracket \raisebox{-8pt}{\includegraphics[height=.3in]{RIII2a}} \right \rrbracket \right ) 
		- q\left ( q\left \llbracket \raisebox{-8pt}{\includegraphics[height=.3in]{RIII1d}} \right \rrbracket
		+ q^{-1} \left \llbracket \raisebox{-8pt}{\includegraphics[height=.3in]{RIII1b}} \right \rrbracket
		- \left \llbracket \raisebox{-8pt}{\includegraphics[height=.3in]{Rth2j}} \right \rrbracket \right ) \\
		&& -q^{-1} \left ( q\left \llbracket \raisebox{-8pt}{\includegraphics[height=.3in]{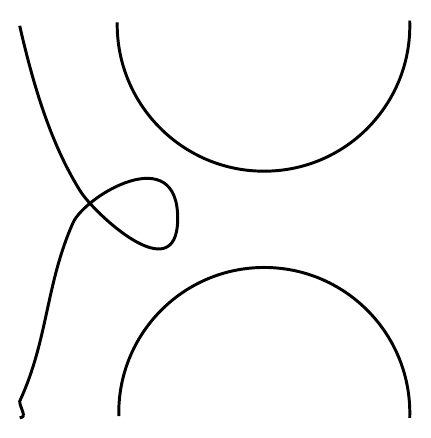}} \right \rrbracket
		+ q^{-1} \left \llbracket \raisebox{-8pt}{\includegraphics[height=.3in]{RIII1c}} \right \rrbracket
		- \left \llbracket \raisebox{-8pt}{\includegraphics[height=.3in]{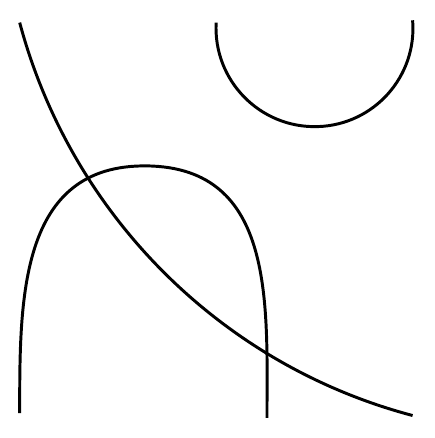}} \right \rrbracket \right ) 
		+ q\left \llbracket \raisebox{-8pt}{\includegraphics[height=.3in]{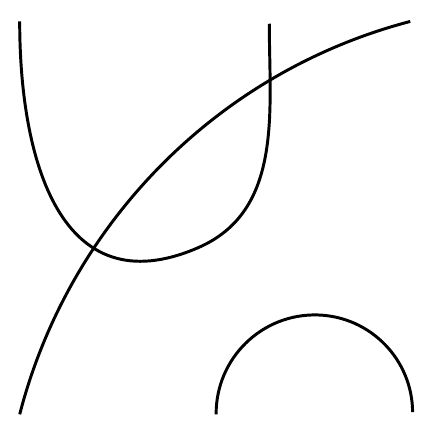}} \right \rrbracket
		+ q^{-1} \left \llbracket \raisebox{-8pt}{\includegraphics[height=.3in]{Rth2i}} \right \rrbracket
		- \left \llbracket \raisebox{-8pt}{\includegraphics[height=.3in]{RIII1a}} \right \rrbracket .
\end{eqnarray*}
Applying Proposition~\ref{R1} and canceling terms, we arrive to 
\begin{eqnarray*}
0 &=& \left \llbracket \raisebox{-8pt}{\includegraphics[height=.3in]{Rth2c}} \right \rrbracket
		- \left \llbracket \raisebox{-8pt}{\includegraphics[height=.3in]{Rth1c}} \right \rrbracket  \\
	&=& q^2 \left \llbracket \raisebox{-8pt}{\includegraphics[height=.3in]{RIII2c}} \right \rrbracket
		+ ([2n-2]+[2])\left \llbracket \raisebox{-8pt}{\includegraphics[height=.3in]{RIII2e}} \right \rrbracket
		- q\left \llbracket \raisebox{-8pt}{\includegraphics[height=.3in]{Rth2h}} \right \rrbracket
		+ \left \llbracket \raisebox{-8pt}{\includegraphics[height=.3in]{RIII2b}} \right \rrbracket
		+ q^{-2} \left \llbracket \raisebox{-8pt}{\includegraphics[height=.3in]{RIII2d}} \right \rrbracket \\
		&&- q^{-1} \left \llbracket \raisebox{-8pt}{\includegraphics[height=.3in]{Rth2k}} \right \rrbracket
		+ \left \llbracket \raisebox{-8pt}{\includegraphics[height=.3in]{RIII2a}} \right \rrbracket
		- q^2 \left \llbracket \raisebox{-8pt}{\includegraphics[height=.3in]{RIII1d}} \right \rrbracket 
		- \left \llbracket \raisebox{-8pt}{\includegraphics[height=.3in]{RIII1b}} \right \rrbracket 
		- ([2n-2]+[2]) \left \llbracket \raisebox{-8pt}{\includegraphics[height=.3in]{RIII1e}} \right \rrbracket \\
		&&- q^{-2} \left \llbracket \raisebox{-8pt}{\includegraphics[height=.3in]{RIII1c}} \right \rrbracket
		+ q^{-1} \left \llbracket \raisebox{-8pt}{\includegraphics[height=.3in]{Rth1i}} \right \rrbracket 
		+ q\left \llbracket \raisebox{-8pt}{\includegraphics[height=.3in]{Rth1j}} \right \rrbracket
		- \left \llbracket \raisebox{-8pt}{\includegraphics[height=.3in]{RIII1a}} \right \rrbracket .
\end{eqnarray*}
Now, from Proposition \ref{R2}, we have that \\
\[ \left \llbracket \raisebox{-8pt}{\includegraphics[height=.3in]{Rth2h}} \right \rrbracket
	= [2]\left \llbracket \raisebox{-8pt}{\includegraphics[height=.3in]{RIII2c}} \right \rrbracket
	+([2n-3]+1)\left \llbracket \raisebox{-8pt}{\includegraphics[height=.3in]{RIII2e}} \right \rrbracket, \\
\left \llbracket \raisebox{-8pt}{\includegraphics[height=.3in]{Rth2k}} \right \rrbracket
	=[2]\left \llbracket \raisebox{-8pt}{\includegraphics[height=.3in]{RIII2d}} \right \rrbracket
	+([2n-3]+1)\left \llbracket \raisebox{-8pt}{\includegraphics[height=.3in]{RIII2e}} \right \rrbracket, 
\]
\[
\left \llbracket \raisebox{-8pt}{\includegraphics[height=.3in]{Rth1i}} \right \rrbracket
	=[2]\left \llbracket \raisebox{-8pt}{\includegraphics[height=.3in]{RIII1c}} \right \rrbracket
	+([2n-3]+1)\left \llbracket \raisebox{-8pt}{\includegraphics[height=.3in]{RIII1e}} \right \rrbracket, \\
\left \llbracket \raisebox{-8pt}{\includegraphics[height=.3in]{Rth1j}} \right \rrbracket
	= [2]\left \llbracket \raisebox{-8pt}{\includegraphics[height=.3in]{RIII1d}} \right \rrbracket
	+([2n-3]+1) \left \llbracket \raisebox{-8pt}{\includegraphics[height=.3in]{RIII1e}} \right \rrbracket.
\]
Making the above replacements and combining like terms gives us
\begin{eqnarray*}
0 &=& (q^2-q[2])\left \llbracket \raisebox{-8pt}{\includegraphics[height=.3in]{RIII2c}} \right \rrbracket
	+ ([2n-2]+[2]-q[2n-3]-q-q^{-1}[2n-3]-q^{-1})\left \llbracket \raisebox{-8pt}{\includegraphics[height=.3in]{RIII2e}} \right
		 \rrbracket \\
	&&+ \left \llbracket \raisebox{-8pt}{\includegraphics[height=.3in]{RIII2b}} \right \rrbracket
	+ (q^{-2}-q^{-1}[2])\left \llbracket \raisebox{-8pt}{\includegraphics[height=.3in]{RIII2d}} \right \rrbracket
	+ \left \llbracket \raisebox{-8pt}{\includegraphics[height=.3in]{RIII2a}} \right \rrbracket 
	+ (q[2]-q^2)\left \llbracket \raisebox{-8pt}{\includegraphics[height=.3in]{RIII1d}} \right \rrbracket \\
	&&- \left \llbracket \raisebox{-8pt}{\includegraphics[height=.3in]{RIII1b}} \right \rrbracket
	+(q^{-1}[2n-3]+q^{-1}+q[2n-3]+q-[2n-2]-[2])\left \llbracket \raisebox{-8pt}{\includegraphics[height=.3in]{RIII1e}} \right
		 \rrbracket \\
	&&+(q^{-1}[2]-q^{-2})\left \llbracket \raisebox{-8pt}{\includegraphics[height=.3in]{RIII1c}} \right \rrbracket
	-\left \llbracket \raisebox{-8pt}{\includegraphics[height=.3in]{RIII1a}} \right \rrbracket \\
&=& \left \llbracket \raisebox{-8pt}{\includegraphics[height=.3in]{RIII2a}} \right \rrbracket
	+ \left \llbracket \raisebox{-8pt}{\includegraphics[height=.3in]{RIII2b}} \right \rrbracket
	- \left \llbracket \raisebox{-8pt}{\includegraphics[height=.3in]{RIII2c}} \right \rrbracket
	- \left \llbracket \raisebox{-8pt}{\includegraphics[height=.3in]{RIII2d}} \right \rrbracket
	- [2n-4] \left \llbracket \raisebox{-8pt}{\includegraphics[height=.3in]{RIII2e}} \right \rrbracket \\
	&& - \left (\left \llbracket \raisebox{-8pt}{\includegraphics[height=.3in]{RIII1a}} \right \rrbracket
	+\left \llbracket \raisebox{-8pt}{\includegraphics[height=.3in]{RIII1b}} \right \rrbracket
	-\left \llbracket \raisebox{-8pt}{\includegraphics[height=.3in]{RIII1c}} \right \rrbracket
	-\left \llbracket \raisebox{-8pt}{\includegraphics[height=.3in]{RIII1d}} \right \rrbracket
	-[2n-4]\left \llbracket \raisebox{-8pt}{\includegraphics[height=.3in]{RIII1e}} \right \rrbracket \right ),
\end{eqnarray*}
and the statement follows.
\end{proof}

Propositions~\ref{R0} through \ref{R3} provide consistent and sufficient skein relations to evaluate any planar unoriented 4-valent graph. In addition, the skein relation~\eqref{eqn:skein-def} together with these propositions yield a state summation model for the $SO(2n)$ Kauffman polynomial. 

\textbf{Acknowledgements.} The first author would like to thank Lorenzo Traldi for his useful comment and question via e-mail after the paper~\cite{CT} appeared on arxiv, which motivated this work. The second author was partially supported by an Undergraduate Research Grant from the California State University, Fresno to participate in the research of this paper.


\end{document}